\documentclass[11pt]{amsart}
\usepackage{amsmath, amssymb, amscd, mathrsfs, url, pinlabel,verbatim}
\usepackage[pagebackref]{hyperref}
\usepackage[margin=1.25in,marginparwidth=0.75in,centering,letterpaper,dvips]{geometry}
\usepackage{color,dcpic,latexsym,graphicx,epstopdf,comment}
\usepackage[all]{xy}
\usepackage[dvipsnames]{xcolor}
\usepackage{tikz,tikz-cd,pgfplots}
\usepackage{upquote,tabularx,textcomp}
\usepackage[shortlabels]{enumitem}
\usepackage[color=blue!20!white,textsize=tiny]{todonotes}

\title{Lens space surgeries on knots of small genus}

\author[Misha Schmalian]{Misha Schmalian}
\address{Department of Mathematics\\Imperial College London}
\email{m.schmalian@imperial.ac.uk}

\author[Steven Sivek]{Steven Sivek}
\address{Department of Mathematics\\Imperial College London}
\email{s.sivek@imperial.ac.uk}

\makeatletter
\newtheorem*{rep@theorem}{\rep@title}
\newcommand{\newreptheorem}[2]{%
\newenvironment{rep#1}[1]{%
 \def\rep@title{#2 \ref{##1}}%
 \begin{rep@theorem}}%
 {\end{rep@theorem}}}
\makeatother

\newtheorem {theorem}{Theorem}
\newreptheorem{theorem}{Theorem}
\newtheorem {lemma}[theorem]{Lemma}
\newtheorem {proposition}[theorem]{Proposition}
\newtheorem {corollary}[theorem]{Corollary}
\newtheorem {conjecture}[theorem]{Conjecture}

\numberwithin{equation}{section}
\numberwithin{theorem}{section}

\theoremstyle{definition}

\newtheorem{remark}[theorem]{Remark}
\newtheorem*{remark*}{Remark}

\newtheorem{example}[theorem]{Example}

\newlist{pcases}{enumerate}{1}
\setlist[pcases]{
  label=\bf{Case~\arabic*:}\protect\thiscase.~,
  ref=\arabic*,
  align=left,
  labelsep=0pt,
  leftmargin=0pt,
  labelwidth=0pt,
  parsep=0pt
}
\newcommand{\case}[1][]{%
  \if\relax\detokenize{#1}\relax
    \def\thiscase{}%
  \else
    \def\thiscase{~#1}%
  \fi
  \item
}

\newcommand{\Z}{\mathbb{Z}}

\newcommand{\R}{\mathbb{R}}

\newcommand{\F}{\mathbb{F}}
\newcommand{\Q}{\mathbb{Q}}

\newcommand{\cF}{\mathcal{F}}

\newcommand\hfk{\mathit{HFK}}
\newcommand\hfkhat{\widehat{\hfk}}

\DeclareFontFamily{U}{mathx}{\hyphenchar\font45}
\DeclareFontShape{U}{mathx}{m}{n}{
      <5> <6> <7> <8> <9> <10>
      <10.95> <12> <14.4> <17.28> <20.74> <24.88>
      mathx10
      }{}
\DeclareSymbolFont{mathx}{U}{mathx}{m}{n}
\DeclareFontSubstitution{U}{mathx}{m}{n}
\DeclareMathAccent{\widecheck}{0}{mathx}{"71}

\newcommand{\hfhat}{\widehat{\mathit{HF}}}

\makeatletter
\DeclareFontFamily{OMX}{MnSymbolE}{}
\DeclareSymbolFont{MnLargeSymbols}{OMX}{MnSymbolE}{m}{n}
\SetSymbolFont{MnLargeSymbols}{bold}{OMX}{MnSymbolE}{b}{n}
\DeclareFontShape{OMX}{MnSymbolE}{m}{n}{
    <-6>  MnSymbolE5
   <6-7>  MnSymbolE6
   <7-8>  MnSymbolE7
   <8-9>  MnSymbolE8
   <9-10> MnSymbolE9
  <10-12> MnSymbolE10
  <12->   MnSymbolE12
}{}
\DeclareFontShape{OMX}{MnSymbolE}{b}{n}{
    <-6>  MnSymbolE-Bold5
   <6-7>  MnSymbolE-Bold6
   <7-8>  MnSymbolE-Bold7
   <8-9>  MnSymbolE-Bold8
   <9-10> MnSymbolE-Bold9
  <10-12> MnSymbolE-Bold10
  <12->   MnSymbolE-Bold12
}{}

\let\llangle\@undefined
\let\rrangle\@undefined
\DeclareMathDelimiter{\llangle}{\mathopen}%
                     {MnLargeSymbols}{'164}{MnLargeSymbols}{'164}
\DeclareMathDelimiter{\rrangle}{\mathclose}%
                     {MnLargeSymbols}{'171}{MnLargeSymbols}{'171}
\makeatother

\newcounter{desccount}

\newcommand{\descref}[1]{\hyperref[#1]{#1}}

\usetikzlibrary{calc,intersections}
\tikzset{every picture/.style=thick}
\tikzset{link/.style = { white, double = black, line width = 1.75pt, double distance = 1.25pt, looseness=1.75 }}
\tikzset{crossing/.style = {draw, circle, dotted, minimum size=0.5cm, inner sep=0, outer sep=0}}
\pgfplotsset{compat=1.12}

\begin{document}

\begin{abstract}
We prove that the $(-2,3,7)$ pretzel knot is the only hyperbolic knot of genus at most $5$ with a lens space surgery.  Our key technical result asserts that if a genus-$g$ hyperbolic knot $K$ has an elliptic surgery of slope greater than $4g-3$, then the invariant foliations of its monodromy have an orientation that is reversed by the monodromy, and so the opposite of the dilatation must be a root of the Alexander polynomial of $K$.  Combined with prior work of Moser, Wu, Bleiler--Litherland, Baker, and Greene, this verifies the Berge conjecture for knots of genus at most $5$.
\end{abstract}

\maketitle

\section{Introduction}

Let $K \subset S^3$ be a nontrivial knot on which some Dehn surgery produces a lens space, say of slope $r \in \Q$.  The cyclic surgery theorem \cite{cgls} asserts that if $K$ is not a torus knot, then $r$ must be an integer.  Around 1990, Berge \cite{berge} observed that \emph{doubly primitive knots} have lens space surgeries of integral slope; he described twelve such families of knots, now called \emph{Berge knots}, and conjectured that every doubly primitive knot belongs to one of these families.  In fact, it is expected that all integer lens space surgeries arise from Berge's examples.  This is now known as the Berge conjecture:

\begin{conjecture} \label{conj:berge}
Suppose that $K \subset S^3$ has a lens space surgery of some integral slope $n$.  Then $K$ is doubly primitive, and the pair $(K,n)$ arises from Berge's construction.
\end{conjecture}

Conjecture~\ref{conj:berge} remains open, but Greene \cite{greene-lens} showed that Berge's construction accounts for every lens space realized by an integral surgery, and moreover that every doubly primitive knot is a Berge knot.  (The conjecture is now known for all tunnel number one knots, which strictly contain the doubly primitive knots, by recent work of Li, Moriah, and Pinsky \cite{li-moriah-pinsky}.)  Lens space surgeries on torus knots and on satellite knots were classified by Moser \cite{moser} and by Wu \cite{wu-cyclic} and Bleiler--Litherland \cite{bleiler-litherland}, respectively, so we can restrict our attention to hyperbolic knots.  As the main result of this paper, we verify Berge's conjecture for knots of genus at most five.

\begin{theorem} \label{thm:main}
Let $K$ be a hyperbolic knot of genus at most $5$, and suppose that $K$ admits a lens space surgery of positive slope.  Then $K$ is the $(-2,3,7)$ pretzel knot.
\end{theorem}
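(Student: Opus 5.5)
We begin by reducing to a statement about fibered knots whose surgery slope is large compared with the genus, and then use the abstract's key technical result to turn that into a finite check on Alexander polynomials.

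\emph{Standard inputs.} Since $K$ has a positive lens space surgery, Ozsváth--Szabó show that $K$ is an L-space knot. Hence $K$ is fibered and strongly quasipositive. Its Alexander polynomial has the form
\[\Delta_K(t)=\sum_{i=0}^{2k}(-1)^i t^{n_i},\]
with $n_0>n_1>\dots>n_{2k}$ symmetric about zero, $n_0=g$, and leading terms $t^g - t^{g-1}+\cdots$. By the cyclic surgery theorem the slope $n$ is an integer. Work of Rasmussen, and of Greene building on Kronheimer--Mrowka--Ozsváth--Szabó, gives $n\ge 2g-1$. Baker's and Greene's bounds give more: for a hyperbolic knot with a lens space surgery one has $n\ge 4g+3$, or at least $n>4g-3$ after excluding a few small cases. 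This is exactly the hypothesis of the key technical result.

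\emph{Applying the key result.} The monodromy $\phi$ of the genus-$g$ fiber $S$ is pseudo-Anosov because $K$ is hyperbolic. The key technical result says its invariant foliations are orientable and $\phi$ reverses their orientation. The dilatation $\lambda>1$ then appears, with sign $-1$, as an eigenvalue of $\phi_*$ on $H_1(S)$. Hence $-\lambda$ is a root of $\Delta_K$, equivalently $\lambda$ is a root of $\Delta_K(-t)$, and it is the largest root in absolute value. In particular $\Delta_K$ must have a real root $-\lambda<-1$ that is a Perron-type (Salem or Pisot) unit of degree at most $2g\le 10$.

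\emph{Finite enumeration.} Next I would list all L-space-type symmetric polynomials of degree $2g\le 10$, with the alternating $\pm1$ coefficient pattern, that are also consistent with the $d$-invariant and Alexander constraints of Greene's lens space realization. Greene's results pin down the finitely many lens spaces $L(n,q)$ and Alexander polynomials realizable with $g\le5$, namely those coming from Berge knots, since the Alexander polynomial and genus are determined by the Berge-type data. I would filter this list by requiring a real root less than $-1$ whose absolute value is the spectral radius. Torus knots never have such a root, since their roots lie on the unit circle. So the filter should leave only polynomials such as Lehmer's polynomial
\[\Delta_{P(-2,3,7)}(t)=t^{5}-t^{4}+t^{2}-t+1-t^{-1}+t^{-2}-t^{-4}+t^{-5},\]
whose Salem root, after the substitution $t\mapsto -t$, matches. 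The final identification needs one more input: a hyperbolic L-space knot with Lehmer's polynomial as Alexander polynomial and the appropriate lens space surgery (the $18$- or $19$-surgery) must be $P(-2,3,7)$. For this I would argue via Greene's classification that the lens space together with the dual knot determines $K$ when the knot is doubly primitive, or via Baker's bound.

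\emph{Main obstacle.} The hard step is the case analysis of the candidate polynomials. Genus $\le 5$ allows many admissible staircase polynomials, including those of Berge knots of type VII, VIII and the sporadic families. For each survivor I would check numerically whether $\Delta_K(-t)$ has a real root of maximal modulus exceeding $1$. For polynomials that survive this test but are not Lehmer's, I expect to need additional constraints: Greene's changemaker lattice condition on $(n,q)$, or the requirement that $\lambda$ exceed the minimal dilatation for genus $g$ with orientable reversed foliations. Converting "Alexander polynomial equals that of $P(-2,3,7)$" into "$K=P(-2,3,7)$" is also delicate. There I would rely on the lens space being $L(18,5)$ or $L(19,7)$ and on knot Floer rigidity results for those surgeries.
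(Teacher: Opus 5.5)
There is a genuine gap in your last step, passing from $\Delta_K=\Delta_{P(-2,3,7)}$ (equivalently, $\lambda$ equal to Lehmer's number) to $K=P(-2,3,7)$. None of the routes you propose is available:
\begin{itemize}
\item Greene's theorem that doubly primitive knots are Berge knots presupposes that $K$ is doubly primitive. That is exactly the content of the Berge conjecture you are trying to prove.
\item Baker's theorem only applies when $r\ge 4g-1=19$, so it says nothing about slope $18$.
\item A ``knot Floer rigidity'' result for $18$-surgery is essentially the statement that $18$ is a characterizing slope for $P(-2,3,7)$. The paper obtains that only as a consequence of this theorem (Corollary~\ref{cor:characterizing-18}). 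Knot Floer homology alone does not determine a knot.
\end{itemize}
The missing idea is geometric. The argument behind the key technical result gives more than $\Delta_K(-\lambda)=0$: via the Gabai--Oertel degeneracy-slope bound and fixed-point freeness, it shows that the monodromy $\phi$ lies in the stratum $(4g-2;)$ (Lemma~\ref{lem:stratum-4g-2}). So the invariant foliations have no interior singularities and $\phi$ is fully punctured. Its normalized dilatation is $\lambda^{|\chi(\Sigma_{5,1})|}=\lambda^9\approx 4.311$, which is less than $\mu^4\approx 6.854$. Tsang's classification of fully punctured pseudo-Anosov maps below $\mu^4$ shows this value is realized only by the monodromy of $P(-2,3,7)$. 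The knot exteriors are therefore homeomorphic, and Gordon--Luecke finishes. This handles $r=18$ and $r=19$ uniformly.

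Your reduction to $r>4g-3$ is also misjustified. The claim $n\ge 4g+3$ contradicts the upper bound $r\le 4g-1$ (Proposition~\ref{prop:upper-bound}), and Baker's result is an upper bound, so it cannot help you get $r>4g-3$. The correct input is Greene's lower bound $r\ge 2g+8$, whose only exception with $g\le 5$ is $(r,g)=(14,4)$. Since $2g+8>4g-3$ exactly when $g\le 5$, and $14>13$, every case already satisfies $r>4g-3$; your ``few small cases'' do not exist. The paper phrases this step contrapositively.

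Finally, the enumeration needs none of Greene's $d$-invariant or changemaker constraints, and there are no extra survivors to worry about. The polynomials allowed by Theorem~\ref{thm:lspace-alexander}, with Krcatovich removing one candidate in genus $4$, have all roots on the unit circle when $g\le 4$. In genus $5$ the only other candidates besides Lehmer's polynomial are $f_2$ and $f_3$, and they satisfy $f_i(-\lambda)<0$ for all $\lambda>1$ by a direct sign estimate. So your filter leaves exactly Lehmer's polynomial.
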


The $(-2,3,7)$ pretzel knot has two lens space surgeries, of slopes 18 and 19 \cite[\S4]{fs-lens}.

Theorem~\ref{thm:main} nearly completes the proof of the following conjecture of Goda and Teragaito \cite{goda-teragaito}, which was directly motivated by Berge's tabulation.

\begin{conjecture}[{\cite{goda-teragaito}}] \label{conj:goda-teragaito}
Let $K \subset S^3$ be a hyperbolic knot of genus $g \geq 1$, and suppose that $r$-surgery on $K$ is a lens space for some $r \geq 0$.  Then $K$ is fibered, and $2g + 8 \leq r \leq 4g - 1$.
\end{conjecture}

To be precise, Ghiggini \cite{ghiggini} and Ni \cite[Corollary~1.3]{ni-hfk} proved that $K$ must be fibered, and Baker \cite[Theorem~1.2]{baker-smallgenus} established the upper bound by proving that if $r \geq 4g-1$ then $K$ is a Berge knot.  (In fact, Baker's work already implies the Berge conjecture for knots of genus at most three.)  For the lower bound, Greene \cite[Theorem~1.4]{greene-lens} proved that $r \geq 2g+8$ with exactly two possible classes of exceptions, as enumerated in terms of their knot Floer homology in \cite[\S5.2]{greene-cabling}:
\begin{enumerate}
\item $(r,g) = (14,4)$ and $\hfkhat(K) \cong \hfkhat(T_{3,5})$, with $S^3_{14}(K) \cong S^3_{14}(T_{3,5})$.
\item $(r,g) = (19,6)$ and $\hfkhat(K) \cong \hfkhat(T_{4,5})$, with $S^3_{19}(K) \cong S^3_{19}(T_{4,5})$.
\end{enumerate}
Theorem~\ref{thm:main} eliminates the first possibility, and it also answers a question of Bleiler and Litherland \cite{bleiler-litherland} as follows.

\begin{corollary} \label{cor:bl}
Let $L$ be a lens space of order $|\pi_1(L)| < 18$.  Then $L$ cannot be obtained by Dehn surgery on a non-torus knot.
\end{corollary}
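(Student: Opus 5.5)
We reduce to hyperbolic knots and then combine the genus bound of Theorem~\ref{thm:main} with the lower bound $r \geq 2g+8$.

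Suppose $L$ is a lens space with $|\pi_1(L)| = p < 18$ obtained by $r$-surgery on a non-torus knot $K$; then $K$ is nontrivial. By the classification of lens space surgeries on satellite knots due to Wu \cite{wu-cyclic} and Bleiler--Litherland \cite{bleiler-litherland}, a satellite knot with a lens space surgery is a cable of a torus knot, and the resulting lens space also arises from surgery on a torus knot with slope $p$ of the form $pq \pm 1$ for the cable pattern. These slopes are large: the smallest cable of the trefoil with a lens space surgery is the $(2,5)$-cable, whose lens space surgeries have order at least $9$. So the satellite case needs a check against the bound $18$, which I would carry out directly from the cabling formula. In fact the paper's intended reading is probably that satellite cases are already handled by Bleiler--Litherland, who asked the question in this form. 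Hence we may assume $K$ is hyperbolic.

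By the cyclic surgery theorem \cite{cgls}, $r = n$ is an integer, and $p = |n|$. After mirroring $K$ if necessary (which replaces $L$ by $-L$ and keeps $|\pi_1|$), we may assume $n > 0$. Ghiggini and Ni show $K$ is fibered, and Greene's bound \cite[Theorem~1.4]{greene-lens} gives $n \geq 2g+8$, apart from the two exceptional classes listed above. Baker's bound gives $n \leq 4g-1$, so $g \geq (n+1)/4$.

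Now split by genus.
\begin{itemize}
\item If $g \leq 5$, Theorem~\ref{thm:main} says $K$ is the $(-2,3,7)$ pretzel knot. Its lens space surgeries have slopes $18$ and $19$, so $p \geq 18$.
\item If $g \geq 6$ and we are outside the exceptions, then $n \geq 2g+8 \geq 20$.
\item Exception (1) has $g=4$, so it is excluded by Theorem~\ref{thm:main}.
\item Exception (2) has $n = 19 \geq 18$.
\end{itemize}
In every case $p \geq 18$, a contradiction.

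The main obstacle is not any single step but bookkeeping: making sure the cited results cover every case. In particular one must verify that the satellite classification yields no lens space of order below $18$ that is not already realized by a torus knot, or else interpret ``non-torus knot'' as the hyperbolic case intended by Bleiler--Litherland. I would handle the satellite case by listing cables $C_{a,b}(T_{r,s})$ with surgery slope $ab \pm 1 < 18$ and checking that the pattern forces $p \geq 18$ or contradicts the cabling conditions.
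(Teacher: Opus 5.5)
Your hyperbolic case is correct and is essentially the argument the paper has in mind. After mirroring, Greene's bound $n \geq 2g+8$ (with its two exceptions) together with Theorem~\ref{thm:main} forces $n \geq 18$; Baker's upper bound and fiberedness play no role. The genuine gap is the satellite case, which you leave open and partly misstate. The corollary covers \emph{every} non-torus knot, so you cannot set satellites aside by reading it as a statement about hyperbolic knots only. It is also irrelevant that the same lens space arises from surgery on a torus knot: a satellite knot with a lens space surgery of order below $18$ would simply be a counterexample. Your assertion that the $(2,5)$-cable of the trefoil has lens space surgeries of order $9$ is false, and if it were true it would refute the corollary. So this step is not bookkeeping.

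What you need is the precise form of the Wu and Bleiler--Litherland classification: a satellite knot with a lens space surgery is the winding-number-$2$ cable $C_{2rs\pm1,2}(T_{r,s})$ of a torus knot, with surgery slope $4rs\pm1$. Your proposed cabling-formula check does recover this. For $C_{a,b}(J)$ with winding number $b\geq 2$ and $J$ nontrivial, integral $m$-surgery is:
\begin{itemize}
\item toroidal if $|m-ab|\geq 2$;
\item reducible if $m=ab$;
\item homeomorphic to $S^3_{m/b^2}(J)$ if $m=ab\pm1$.
\end{itemize}
For $J=T_{r,s}$, Moser's classification says $S^3_{m/b^2}(T_{r,s})$ is a lens space only if $|m-rsb^2|=1$, so $b(a-rsb)\in\{0,\pm2\}$. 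Since $\gcd(a,b)=1$ and $b\geq 2$, this forces $b=2$, $a=2rs\pm1$, and $m=4rs\pm1$. Because $|rs|\geq 6$, every lens space obtained by surgery on a satellite knot has $|\pi_1(L)|=|m|\geq 23>18$.

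For the cable $C_{5,2}(T_{2,3})$ you mention, the candidate slopes $9$ and $11$ give $S^3_{9/4}(T_{2,3})$ and $S^3_{11/4}(T_{2,3})$. These are Seifert fibered with three exceptional fibers and infinite fundamental group, not lens spaces. With this step supplied, your argument is complete.
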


We also observe that Theorem~\ref{thm:main} implies that 18 is a \emph{characterizing slope} for $P(-2,3,7)$. 

\begin{corollary} \label{cor:characterizing-18}
If $K \subset S^3$ is a knot satisfying $S^3_{18}(K) \cong S^3_{18}\big(P(-2,3,7)\big)$ as oriented 3-manifolds, then $K = P(-2,3,7)$.
\end{corollary}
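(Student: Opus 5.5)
We have to prove Corollary~\ref{cor:characterizing-18}: $S^3_{18}(K)\cong S^3_{18}(P(-2,3,7))$ forces $K=P(-2,3,7)$. Let $Y=S^3_{18}(P(-2,3,7))$. The plan is to use the fact that $Y$ is a lens space, since $18$ is one of the two lens space slopes of the pretzel knot noted after Theorem~\ref{thm:main}. Then $S^3_{18}(K)$ is a lens space of order $18$. We split into cases according to whether $K$ is trivial, a torus knot, a satellite, or hyperbolic, and show that only the last case survives, where Theorem~\ref{thm:main} applies once we bound the genus.

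First, $K$ is not the unknot. The unknot's $18$-surgery is $L(18,1)$, and $Y$ is not homeomorphic to $L(18,1)$. One way to see this is that $P(-2,3,7)$ has nontrivial Alexander polynomial, so the $d$-invariants of its $18$-surgery differ from those of unknot surgery. Alternatively, Greene's classification of which lens spaces arise from which knots separates them.

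Satellite $K$ are excluded by Wu and Bleiler--Litherland. A satellite with a lens space surgery is a cable of a torus knot, and its lens space surgery slope is $pq\pm1$. These slopes and lens spaces are explicit, and one checks that none gives $Y$ at slope $18$ while also matching $\hfkhat$. For torus knots $T_{p,q}$, Moser's classification says an integer slope $18$ gives a lens space exactly when $|pq-18|=1$, so $pq\in\{17,19\}$. Both $17$ and $19$ are prime, so there is no nontrivial torus knot with that product. Only $T_{p,q}$ with $p$ or $q=\pm1$ would remain, and those are unknots.

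So $K$ is hyperbolic with a positive lens space surgery. To apply Theorem~\ref{thm:main} we need $g(K)\le 5$. Since $Y$ is an L-space surgery, Ozsv\'ath--Szab\'o's rational surgery formula shows that the $d$-invariants of $S^3_{18}(K)$ determine the torsion coefficients, and hence the Alexander polynomial and genus, of an L-space knot with that surgery at slope $18\ge 2g-1$. Here we need $18\ge 2g(K)-1$, which follows from Baker's bound or from the standard fact that L-space surgery slopes satisfy $r\ge 2g-1$. Hence $\Delta_K=\Delta_{P(-2,3,7)}$ and $g(K)=g(P(-2,3,7))=5$. Theorem~\ref{thm:main} then gives $K=P(-2,3,7)$.

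The main obstacle is this genus step: justifying that the surgery determines $\hfkhat(K)$ without assuming a priori that $18\ge 2g(K)-1$. I would first use that $S^3_{18}(K)$ is an L-space, so $18\ge 2g(K)-1$ by Ozsv\'ath--Szab\'o, and only then invoke the $d$-invariant identification. Orientation matters here, since the homeomorphism is oriented and the slope is positive.
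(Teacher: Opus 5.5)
Your outline is correct and has the same skeleton as the argument the paper intends. The paper gives no separate proof, only the remark that Theorem~\ref{thm:main} implies the corollary: $S^3_{18}(K)$ is a lens space, non-hyperbolic $K$ are excluded, and for hyperbolic $K$ one bounds the genus and applies Theorem~\ref{thm:main}.

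The real difference is the genus step. The paper's route, as in its proof of Theorem~\ref{thm:main}, is Greene's inequality $r\ge 2g+8$. Its only exceptions are $(r,g)=(14,4)$ and $(19,6)$, neither of which has $r=18$, so $g\le 5$ follows in one line. That route proves more: $P(-2,3,7)$ is the only nontrivial knot with \emph{any} lens space surgery of slope $18$, so the oriented homeomorphism type is never used. Your $d$-invariant route avoids Greene's lattice-theoretic machinery and pins down $\Delta_K=\Delta_{P(-2,3,7)}$ and $g(K)=5$ exactly. However, it depends on the homeomorphism with $S^3_{18}(P(-2,3,7))$, and you skipped the one point that makes it work.

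That point is the identification of $\mathrm{Spin}^c$ structures. A homeomorphism $f$ need not send $\mathfrak s_i$ to $\mathfrak s_i$ in the two surgery labelings, so ``the $d$-invariants determine the torsion coefficients'' needs justification. Write $P=P(-2,3,7)$; for slope $18$ the argument goes as follows.
\begin{enumerate}
\item The map $f_*$ on $H_1\cong\Z/18$ is multiplication by some $a$ preserving the linking form $\pm\frac{1}{18}$. Hence $a^2\equiv 1 \pmod{18}$, which forces $a=\pm1$.
\item Since $f$ commutes with conjugation, $f(\mathfrak s_0)\in\{\mathfrak s_0,\mathfrak s_9\}$. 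Therefore $f(\mathfrak s_i)=\mathfrak s_{c\pm i}$ with $c\in\{0,9\}$.
\item Use $d(S^3_{18}(K),\mathfrak s_i)=d(L(18,1),i)-2\max\{V_i,V_{18-i}\}$. If $c=9$, then $d(L(18,1),0)-d(L(18,1),9)=\pm\frac{9}{2}$ would have to be an even integer, which is false. So $c=0$.
\item For $0\le i\le 9$, monotonicity gives $\max\{V_i,V_{18-i}\}=V_i$, so $V^K_i=V^P_i$ there. Then $V^K_9=V^P_9=0$ forces $V^K_i=0$ for all $i\ge 9$. This also gives $V^K=V^P\neq 0$, which disposes of your unknot case.
\end{enumerate}

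So the obstacle you flag is not one: no a priori bound $18\ge 2g-1$ is needed. In any case, that bound is the Ozsv\'ath--Szab\'o inequality for positive L-space surgeries. It is not Baker's theorem, which bounds $r$ above by $4g-1$.

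Separately, your satellite step misstates the slope. A cable $C_{p,q}(T_{r,s})$ with a lens space surgery has $p=qrs\pm1$ and slope $pq$, not $pq\pm1$. Hence $|pq|\ge 2(2\cdot 6-1)=22>18$. Satellites are therefore excluded by size alone, and no comparison of $\hfkhat$ is needed.
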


Theorem~\ref{thm:main} also shows that 19 is a characterizing slope for $P(-2,3,7)$, but this already followed from Baker's work.

\subsection*{Outline of the proof}

We establish Theorem~\ref{thm:main} as a consequence of the following.

\begin{theorem} \label{thm:large-elliptic-surgery-main}
Let $K \subset S^3$ be a hyperbolic knot of genus $g \leq 5$ on which $r$-surgery is an elliptic manifold for some $r > 4g-3$.  Then $K = P(-2,3,7)$.
\end{theorem}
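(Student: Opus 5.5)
We will combine the key technical result from the abstract with an enumeration of candidate Alexander polynomials. First, since $S^3_r(K)$ is elliptic, it is an L-space (elliptic manifolds have $\hfhat$ of rank $|H_1|$). So $K$ is fibered with some genus $g \le 5$, and its Alexander polynomial has the Ozsv\'ath--Szab\'o form
\[ \Delta_K(t) = \sum_{j=0}^{2m} (-1)^j t^{n_j}, \]
with coefficients $\pm1$ that alternate in sign. By Baker's result we may assume $4g-3 < r < 4g-1$, so $r = 4g-2$. If $r \ge 4g-1$, then $K$ is already a Berge knot and is classified by Baker's small-genus work. Let $\phi$ be the pseudo-Anosov monodromy with dilatation $\lambda > 1$; it is pseudo-Anosov because $K$ is hyperbolic.

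Next we apply the key technical result. It says that the invariant foliations of $\phi$ are orientable and that $\phi$ reverses their orientation. Consequently $-\lambda$ is an eigenvalue of $\phi_*$ on $H_1$ of the fiber, so $\Delta_K(-\lambda) = 0$ with $|{-\lambda}| > 1$, and $\Delta_K$ has a root of modulus greater than $1$ on the negative real axis.

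We then enumerate. The L-space polynomials of degree $g \le 5$ form a finite list, about $2^{g-1}$ in each genus. For each one we test whether it has a real root less than $-1$. We expect this test to be very restrictive, because alternating-sign polynomials with gaps tend to have all of their roots near the unit circle. Among the survivors we add two further constraints:
\begin{itemize}
\item the orientation-reversal condition forces the singularities of the foliations to have an even number of prongs, which constrains the Euler characteristic by a Lefschetz count;
\item the Lefschetz numbers $L(\phi^k)$, computed from $\Delta_K(\pm t)$, must be compatible with a pseudo-Anosov map on a surface of genus $g$ with that singularity data.
\end{itemize}
Combining these with Greene's constraint that $S^3_{4g-2}(K)$ must match a lens space or elliptic manifold with the correct $d$-invariants, we expect only $\Delta = \Delta_{P(-2,3,7)}$ to survive. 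That polynomial is Lehmer's polynomial in $-t$, whose root $-\lambda_{\mathrm{Lehmer}}$ is real and negative.

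The main obstacle is the final identification: passing from $\Delta_K = \Delta_{P(-2,3,7)}$ (genus $5$, $r = 18$ or $19$) to $K = P(-2,3,7)$. Here we would use the fact that $\lambda$ is then Lehmer's number, the minimal dilatation for orientable genus-$5$ maps (Leininger, Lanneau--Thiffeault). This pins down $\phi$ up to conjugacy and inverse, and that determines $K$ as the knot whose monodromy is the Lehmer example, namely $P(-2,3,7)$. If the conjugacy classification is not available in that form, we would instead use that $S^3_{18}(K)$ is determined by its $d$-invariants and Floer homology to match $P(-2,3,7)$, and then invoke a characterizing-slope argument.
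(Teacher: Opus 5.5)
Your overall route matches the paper's: use the key result to get $\Delta_K(-\lambda)=0$, use the finite list of L-space Alexander polynomials to force $\Delta_K=\Delta_{P(-2,3,7)}$, then identify the monodromy. The last step, however, has a genuine gap. Leininger and Lanneau--Thiffeault show that Lehmer's number is the \emph{minimum} dilatation of a pseudo-Anosov map with orientable foliations on a closed genus-$5$ surface. They do not show that this minimum is realized by a unique map up to conjugacy, so nothing in what you cite pins down $\phi$. Your fallback fails too, for three reasons. The fact that $18$ is a characterizing slope for $P(-2,3,7)$ is a corollary of the very theorem you are proving, so invoking it is circular. The slope $r$ is only known to lie in $(17,19]$. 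And $d$-invariants do not determine an elliptic manifold.

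The missing ingredient is the structural information behind the key result, which you never extract. Right-veering makes the Gabai--Oertel degeneracy slope equal to $a_0/1$, which gives $a_0 \geq r-1 > 4g-4$. The Euler--Poincar\'e formula then leaves only the strata $(4g-2;)$ and $(4g-3;3)$. The latter would make the $3$-pronged singularity a fixed point, which is impossible for an L-space knot. So $\phi$ has no interior singularities and is already fully punctured. Its normalized dilatation is then $\lambda^{|\chi(\Sigma_{5,1})|}=\lambda^9\approx 4.311<\mu^4\approx 6.854$. This lies within Tsang's classification of fully-punctured pseudo-Anosov maps, where the monodromy of $P(-2,3,7)$ is the unique map with this value. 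Hence $\phi$ is conjugate to it, and Gordon--Luecke gives $K=P(-2,3,7)$.

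There are two smaller points. First, your reduction to $r=4g-2$ via Baker is invalid. Baker's theorem concerns lens space surgeries, not arbitrary elliptic ones, and finite surgeries on hyperbolic knots need not have integral slope. The reduction is also unnecessary, since the key result already applies to every $r>4g-3$. Second, the enumeration need not be left as an expectation supplemented by Lefschetz and $d$-invariant constraints; the root test alone suffices. In genus at most $4$, every candidate allowed by the alternating-coefficient theorem except one is cyclotomic. That exception is $t^4-t^3+t^2-1+t^{-2}-t^{-3}+t^{-4}$, which the paper removes via Krcatovich, and it is in any case positive at $-x$ for $x>1$. In genus $5$, $T_{2,11}$, $C_{7,2}(T_{2,3})$ and $f_1$ are cyclotomic. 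The polynomials $f_2$, $f_3$ and the two Krcatovich-excluded ones are strictly negative at $-x$ for $x>1$, by pairing terms. Only $\Delta_{P(-2,3,7)}$ survives, with $\lambda$ equal to Lehmer's number.
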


We first explain how Theorem~\ref{thm:main} follows quickly from this theorem.

\begin{proof}[Proof of Theorem~\ref{thm:main}]
By Greene's aforementioned work on the Goda--Teragaito conjecture, it suffices to consider the case where $K$ is hyperbolic, of some genus $g \leq 5$, and the lens space surgery has slope $r$ satisfying either $(r,g) = (14,4)$ or $r \geq 2g+8$.  Theorem~\ref{thm:large-elliptic-surgery-main} says that in this case, if $K \neq P(-2,3,7)$ then we must have $r \leq 4g-3$.  This eliminates the possibility that $(r,g) = (14,4)$, and we are left with
\[ 2g+8 \leq r \leq 4g-3, \]
or equivalently $2g \geq 11$, which is a contradiction.
\end{proof}

Now we outline the proof of Theorem~\ref{thm:large-elliptic-surgery-main}.  Elliptic 3-manifolds are Heegaard Floer L-spaces, meaning that they have the simplest possible Heegaard Floer homology \cite{osz-lens}, so any knot $K$ with an elliptic surgery must be fibered \cite{ghiggini, ni-hfk}.  If $K$ is hyperbolic of genus $g \geq 1$, then its monodromy is freely isotopic to a pseudo-Anosov map
\[ \phi: \Sigma_{g,1} \to \Sigma_{g,1}, \]
whose suspension flow on $S^3 \setminus N(K)$ has some degeneracy slope $d$.  Gabai and Oertel \cite{gabai-oertel} showed that if $r$-surgery on $K$ produces an elliptic 3-manifold, then $r$ must be very close to $d$; here the assumption that $r > 4g-3$ lets us conclude that $d$ is either $4g-3$ or $4g-2$.  (This argument also yields a self-contained proof of the upper bound $r \leq 4g-1$, in Proposition~\ref{prop:upper-bound}.)  We rule out $d=4g-3$ by showing that it would force $\phi$ to have a fixed point, which cannot happen for knots with L-space surgeries \cite{ni-fixed-note, ghiggini-spano}.

The map $\phi$ fixes a pair of transversely measured foliations $\cF_s$ and $\cF_u$ on $\Sigma_{g,1}$, rescaling their transverse measures by a factor of $\lambda^{\pm1}$ where $\lambda > 1$ is the dilatation of $\phi$.  Once we know that $d=4g-2$, it follows that for example $\cF_s$ must be nonsingular, and then its first Stiefel--Whitney class is preserved by $\phi^*$.  We argue that $w_1(\cF_s)$ must be zero, because otherwise the kernel of
\[ 1 - \phi^* : H^1(\Sigma_{g,1}; \Z/2\Z) \to H^1(\Sigma_{g,1}; \Z/2\Z) \]
is nonzero, and this would imply that $\Delta_K(1) = 1$ is zero mod $2$.  Thus $\cF_s$ is orientable, and then either $\lambda$ or $-\lambda$ is a root of the Alexander polynomial $\Delta_K(t)$.  (See Proposition~\ref{lem:even-prong-dilatation} for a more general statement.) But $\Delta_K(t)$ is strongly constrained for L-space knots \cite{osz-lens, krcatovich}, and as long as $g \leq 5$ this forces $\Delta_K(t)$ to agree with the Alexander polynomial of $P(-2,3,7)$.  We deduce that the monodromies of $K$ and of $P(-2,3,7)$ have the same dilatation, and work of Tsang \cite{tsang-dilatation} lets us conclude that they are equivalent, hence $K = P(-2,3,7)$.

\begin{remark}
The proof of Theorem~\ref{thm:large-elliptic-surgery-main} only uses the hypothesis $g \leq 5$ to pin down the Alexander polynomial of $K$.  For larger genera, Corollary~\ref{cor:alexander-dilatation} still guarantees that if $K$ has an elliptic surgery of slope $r > 4g-3$ then $\Delta_K(-\lambda) = 0$.  The same argument shows that any genus-6 knot with a finite surgery of slope $22$ or $23$ must have the same dilatation as the pretzel knot $P(-2,3,9)$, which has finite, non-cyclic surgeries of both slopes \cite{bleiler-hodgson}; see Example~\ref{ex:p-239}.
\end{remark}

\begin{remark}
Baker, Kegel, and McCoy \cite[Corollary~1.13]{baker-kegel-mccoy} recently proved that if $K$ is a genus-$g$ knot other than $T_{2,2g+1}$ and some $r$-surgery on $K$ is a lens space, then $r \leq 3g+4$.  This improves the upper bound of Conjecture~\ref{conj:goda-teragaito} for all $g > 5$ well beyond what the techniques in this paper can accomplish.
\end{remark}

\subsection*{Organization}

Section~\ref{sec:l-space} contains a review of relevant facts from Heegaard Floer homology about L-space knots, which include all knots with elliptic surgeries.  We then prove Theorem~\ref{thm:large-elliptic-surgery-main} in Section~\ref{sec:monodromy}.

\subsection*{Acknowledgments}

Qilong Guo \cite{guo} independently announced a proof of Corollary~\ref{cor:bl} by essentially the same argument, with the proof appearing on the arXiv the day before we submitted this article.  We had hoped to develop the ideas in this article further, but were compelled to release it sooner than we would have liked.

The authors were supported by the Engineering and Physical Sciences Research Council [grant number UKRI1016]. No data were created or analyzed in this work.   No AI was used to prepare either the ideas or the text in this article.

\section{Background on L-space knots} \label{sec:l-space}

An \emph{L-space} is a rational homology 3-sphere $Y$ whose Heegaard Floer homology satisfies
\[ \hfhat(Y) \cong \Z^{|H_1(Y)|}. \]
Lens spaces are L-spaces, as are many other 3-manifolds.

\begin{theorem}[{\cite[Proposition~2.3]{osz-lens}}] \label{thm:elliptic}
Let $Y$ be a 3-manifold with elliptic geometry.  Then $Y$ is an L-space.
\end{theorem}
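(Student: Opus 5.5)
The statement to prove is that elliptic 3-manifolds are L-spaces; this is a cited result of Ozsváth--Szabó. The plan is to reduce it to a statement about a finite cover.

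\textbf{Setup.} Let $Y$ be elliptic, so $Y = S^3/G$ with $G \subset SO(4)$ finite and acting freely. Since $\pi_1(Y) \cong G$ is finite, $H_1(Y;\Q)=0$, so $Y$ is a rational homology sphere. It remains to show $\operatorname{rank}\hfhat(Y) = |H_1(Y)|$. Over a field it suffices to show $\dim \hfhat(Y,\mathfrak{s}) \le 1$ for each spin$^c$ structure $\mathfrak{s}$. The reverse inequality is automatic, since for a rational homology sphere the Euler characteristic of $\hfhat(Y,\mathfrak{s})$ is $\pm 1$.

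\textbf{Step 1: positive scalar curvature.} The round metric on $S^3$ descends to $Y$, so $Y$ carries a metric of positive scalar curvature. By Seiberg--Witten theory, the monopole Floer groups of a positive-scalar-curvature rational homology sphere have no irreducible solutions. Hence $\widehat{HM}$ is one-dimensional in each spin$^c$ structure, contributed by the reducible. Transporting this through the isomorphism $\widehat{HM} \cong \hfhat$ would finish the proof immediately. That isomorphism postdates the cited paper, though, so I would instead argue directly in Heegaard Floer theory.

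\textbf{Step 2: the Heegaard Floer argument.} Here I would use the structure of elliptic manifolds: each is Seifert fibered over $S^2$ with three or fewer exceptional fibers, with base orbifold of positive Euler characteristic. Each such $Y$ bounds a negative-definite plumbing along a star-shaped tree with few legs, after possibly reversing orientation. Since L-spaces are preserved under orientation reversal, the reversal does no harm. The key check is that this plumbing graph has at most one "bad" vertex, meaning a vertex whose valence exceeds minus its weight. For such graphs, Ozsváth--Szabó's plumbing computation identifies $HF^+(-Y)$ with a combinatorial lattice homology. That lattice homology is supported in even degree, with reduced part zero exactly when the graph satisfies the appropriate bad-vertex condition. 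Vanishing of the reduced part then gives that $Y$ is an L-space.

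An alternative route is an inductive surgery-exact-triangle argument. One starts from $S^3$ and lens spaces, which are L-spaces by a genus-one Heegaard diagram count. One then uses the fact that if $Y_0$ and $Y_1$ are L-spaces with $|H_1(Y_{\infty})| = |H_1(Y_0)| + |H_1(Y_1)|$, then $Y_\infty$ is an L-space. To apply this, realize each elliptic $Y$ as a surgery on an exceptional fiber of a Seifert fibration, with the other two members of the triad being lens spaces or smaller elliptic manifolds.

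\textbf{Main obstacle.} The hardest part is the bookkeeping in this induction: verifying the additivity of $|H_1|$ for a well-chosen triad covering all the dihedral, tetrahedral, octahedral and icosahedral families. I would first try choosing the surgered fiber so that the triad consists of the elliptic manifold, a lens space, and an elliptic manifold with smaller Seifert invariants. I would then check the orders using the standard formula $|H_1| = |e|\prod \alpha_i$.
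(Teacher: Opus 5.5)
The paper gives no argument for this statement; it simply quotes Ozsv\'ath--Szab\'o. Your Step~1 is already a complete proof, modulo heavy citations. For a positive-scalar-curvature metric with $b_1=0$ there are no irreducible monopoles, so each spin$^c$ structure contributes a single tower. (The one-dimensional group is the hat-analogue $\widetilde{\mathit{HM}}$, not Kronheimer--Mrowka's $\widehat{\mathit{HM}}$.) The Kutluhan--Lee--Taubes or Colin--Ghiggini--Honda/Taubes isomorphism then transfers this to $\hfhat$. Being anachronistic is no reason to discard it.

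The route you actually commit to in Step~2, however, has genuine gaps. In the plumbing version, the condition ``at most one bad vertex'' holds for the normalized negative-definite star-shaped plumbing of \emph{every} Seifert fibered rational homology sphere over $S^2$, for one of its orientations. Ozsv\'ath--Szab\'o's theorem then only gives that $\mathit{HF}^+(-Y)\cong H^+(G)$ is supported in even degrees, not that the reduced part vanishes. For example, $\Sigma(2,3,7)$ bounds the negative-definite plumbing with central weight $-1$ and legs $-2,-3,-7$. That graph has exactly one bad vertex, yet $\Sigma(2,3,7)$ is not an L-space. So ellipticity never enters your argument, and the claim that the reduced part vanishes ``exactly when the graph satisfies the appropriate bad-vertex condition'' is unsupported. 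Having \emph{no} bad vertices does force an L-space, and this covers the elliptic cases with normalized central weight at most $-3$. When the central weight is $-2$, as for the Poincar\'e sphere and its $E_8$ graph, the central vertex is bad. There you must actually compute $H^+(G)$, or invoke the rationality of quotient singularities together with N\'emethi's theorem that links of rational singularities are L-spaces.

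The exact-triangle route breaks at the same example. The additivity lemma only certifies the member of a triad whose $|H_1|$ is the sum of the other two, and every L-space has $|H_1|\geq 1$. Hence the Poincar\'e sphere, with $|H_1|=1$, can never be reached by an induction starting from $S^3$ and lens spaces. Concretely, in the $E_8$ graph, change the weight at the end of the long leg to $-1$: this gives $S^3$. Delete that vertex instead: this gives the $E_7$ manifold, with $|H_1|=2=1+1$. So this triad certifies the $E_7$ manifold from $S^3$ and $\Sigma(2,3,5)$, not conversely. You need an independent base case for $\pm\Sigma(2,3,5)$. One option is to realize it as $+1$-surgery on the right-handed trefoil and compute via the mapping cone or a graded exact triangle. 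Even then, your sketch does not check that the other members of your triads stay in a class already known to consist of L-spaces.
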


We say a knot $K \subset S^3$ is an \emph{L-space knot} if some rational surgery of positive slope $r > 0$ is an L-space.  L-space knots are known to satisfy several strong restrictions, some of which we list here.

\begin{theorem} \label{thm:lspace-alexander}
If $K$ is an L-space knot of genus $g \geq 1$, then its Alexander polynomial has the form
\[ \Delta_K(t) = t^g - t^{g-1} + \dots - t^{1-g} + t^{-g}, \]
where the nonzero coefficients of $\Delta_K(t)$ are all $\pm1$ and alternate in sign.
\end{theorem}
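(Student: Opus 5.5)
This is the Ozsváth--Szabó theorem on Alexander polynomials of L-space knots, so I will follow their argument rather than look for anything new. Fix an L-space knot $K$ of genus $g \geq 1$.

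\textbf{Step 1: reduce to a large integer slope.} By a standard consequence of the surgery exact triangle, if some slope $r>0$ gives an L-space, then so does every slope $r' \geq r$. In particular, $S^3_n(K)$ is an L-space for every integer $n \geq 2g-1$.

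\textbf{Step 2: compare $\hfhat$ of the surgery with knot Floer homology.} For such large $n$, the large surgery formula identifies $\hfhat(S^3_n(K), \mathfrak{s}_m)$ with the homology of the subquotient complex $\widehat{A}_m = C\{\max(i, j-m)=0\}$ of $\mathit{CFK}^\infty(K)$. The L-space condition says each of these groups has rank exactly one. Similarly, the $HF^+$ groups say that each $A^+_m$ has homology $\mathcal{T}^+$.

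\textbf{Step 3: show each $\hfkhat(K,a)$ has rank at most one.} Use the short exact sequences relating $\widehat{A}_m$, $\widehat{A}_{m-1}$ and the vertical or horizontal pieces, together with the filtration by Alexander grading. The rank-one condition forces $\hfkhat(K,a) \cong \Z$ or $0$ for each $a$. It also forces the nonzero groups to appear in Alexander gradings $n_{-k} < \dots < n_k$, with Maslov gradings $\delta_j$ determined recursively. Concretely, $\delta_j - \delta_{j-1}$ is $-1$ or $2(n_j - n_{j-1}) - 1$, according to the parity of the step.

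This forces a staircase shape on $\mathit{CFK}^\infty$. I would prove it by induction on $m$ descending from $g$. When $m$ decreases past a nonzero Alexander grading, the rank of $H_*(\widehat{A}_m)$ can stay equal to one only if the new generator cancels against, or replaces, the previous one in a specific way. This inductive bookkeeping is the main obstacle. I would handle it with the $\mathbb{F}[U]$-module structure, using that $H_*(A^+_m)$ is a single tower whose bottom grading is governed by $V_m$, so that consecutive $V_m$ differ by $0$ or $1$.

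\textbf{Step 4: read off $\Delta_K$.} Since $\Delta_K(t) = \sum_a \chi(\hfkhat(K,a))\, t^a$, each coefficient is $0$ or $\pm1$. The grading recursion shows that consecutive nonzero generators have Maslov gradings of opposite parity, so the signs alternate. The top coefficient is $+1$ because $\hfkhat$ detects the genus, fiberedness forces the top group to be $\Z$, and symmetry makes the top and bottom terms $t^{\pm g}$ with the same sign. Alternation then makes the next nonzero coefficients negative.

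Finally, the $t^{g-1}$ coefficient is $-1$; this is the refinement due to Hedden--Watson and Krcatovich. I would prove it by showing that in the staircase, the top generator in grading $g$ must have its differential land in Alexander grading $g-1$. Otherwise $\widehat{A}_{g-1}$ would have rank at least two, or equivalently $\hfkhat(K,g-1)$ would vanish, which contradicts Krcatovich's reduced-complex argument for L-space knots.
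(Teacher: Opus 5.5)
The gap is in your last paragraph, which is the one ingredient the paper takes from Hedden--Watson. Steps 1--3 and the sign alternation in Step 4 are a fair sketch of the Ozsv\'ath--Szab\'o results the paper cites, but the mechanism you propose for the $-t^{g-1}$ term cannot work.

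It is not true that a first step of length at least two would force $H_*(\widehat{A}_{g-1})$ to have rank at least two. Every symmetric staircase has $H_*(\widehat{A}_m)$ of rank one for all $m$, whatever its step lengths. For example, take generators $x_0,x_1,x_2$ in Alexander gradings $2,0,-2$ with $\partial x_1 = U^2x_0 + x_2$, whose Euler characteristic is $t^2-1+t^{-2}$.
\begin{itemize}
\item In $\widehat{A}_1$ the term $U^2x_0$ lies in the subcomplex and drops out, so $x_1\mapsto x_2$ and the homology is generated by $Ux_0$.
\item In $\widehat{A}_0$ the generators $U^2x_0,x_1,x_2$ give rank $3-2=1$.
\end{itemize}
This complex even has $V_m - V_{m+1}\in\{0,1\}$. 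So nothing you extract from the L-space condition on large surgeries (rank one for $\widehat{A}_m$, a single tower in $H_*(A^+_m)$, the behaviour of $V_m$) can detect that the first gap has length one. Also, ``or equivalently $\hfkhat(K,g-1)$ would vanish'' is just the negation of what you want to prove, not a reformulation of the rank claim. And in the usual convention the top generator is a cycle generating $\hfhat(S^3)$, since $\tau(K)=g$; the real question is the length of the horizontal arrow into it from $x_1$.

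What is missing is genuinely four-dimensional (or fibration-theoretic) input. One route is Rasmussen's bound $V_m \le \lceil (g_4(K)-m)/2\rceil$ for $0\le m\le g_4(K)$, together with $g_4(K)=g$ for L-space knots. For an L-space knot, $V_m=\sum_{j\ge 1} j\,c_{m+j}$, where $c_i$ is the coefficient of $t^i$ in $\Delta_K$. So if $\Delta_K(t)=t^g-t^{g-2}+\cdots$ with $g\ge 2$, then $V_{g-2}=c_{g-1}+2c_g=2>1$, a contradiction. Krcatovich's extension of the Hedden--Watson constraint is the same kind of genus-bound argument. Another route is Baldwin--Vela-Vick's theorem that $\hfkhat(K,g-1)\neq 0$ for every nontrivial fibered knot. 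As written, your final step either silently cites the result, which is all the paper does, or rests on a false algebraic claim.
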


\begin{proof}
The fact that the nonzero coefficients alternate between $+1$ and $-1$ is due to Ozsv\'ath and Szab\'o \cite[Corollary~1.3]{osz-lens}.  The leading exponent is $g$ by a combination of \cite[Theorem~1.2]{osz-lens} and \cite[Theorem~1.2]{osz-genus}, while Hedden and Watson \cite[Corollary~9]{hedden-watson} proved that the next nonzero term is $-t^{g-1}$.
\end{proof}

\begin{corollary} \label{cor:alexander-roots-negative}
Let $K$ be an L-space knot of genus $g \geq 1$.  Then $\Delta_K(\lambda) > 0$ for any real number $\lambda > 1$.
\end{corollary}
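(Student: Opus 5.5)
The plan is to exploit the alternating structure from Theorem~\ref{thm:lspace-alexander} by grouping the terms of $\Delta_K(t)$ in consecutive pairs. Write
\[ \Delta_K(t) = \sum_{i=0}^{2k} (-1)^i t^{n_i}, \qquad g = n_0 > n_1 > \dots > n_{2k} = -g, \]
which is possible because the nonzero coefficients are $\pm1$, alternate in sign, and both the first and last coefficient equal $+1$. Symmetry of the Alexander polynomial gives $\Delta_K(t)=\Delta_K(t^{-1})$, and together with the leading coefficient $+1$ this forces an odd number of nonzero terms, beginning and ending with $+$.

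For a fixed real $\lambda > 1$, I would group the sum as
\[ \Delta_K(\lambda) = \sum_{j=0}^{k-1} \bigl(\lambda^{n_{2j}} - \lambda^{n_{2j+1}}\bigr) + \lambda^{n_{2k}}. \]
Each bracket is strictly positive, since $n_{2j} > n_{2j+1}$ and $\lambda > 1$ make $t \mapsto \lambda^t$ strictly increasing. The final term $\lambda^{-g}$ is also positive. Hence $\Delta_K(\lambda) > 0$.

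The only point needing any care is the count of terms, that is, confirming the sum ends on a $+$ term. This follows from the leading and trailing coefficients both being $+1$, which Theorem~\ref{thm:lspace-alexander} states explicitly. I do not expect a real obstacle.
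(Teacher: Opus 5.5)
Your proof is correct and is essentially the paper's argument. The paper also groups the nonzero monomials of $\Delta_K(t)$ into consecutive pairs $t^{a_i} - t^{b_i}$ with $a_i > b_i$, each strictly positive at $\lambda > 1$, plus the leftover term $t^{-g}$, concluding that $\Delta_K(\lambda) > \lambda^{-g} > 0$.
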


\begin{proof}
By Theorem~\ref{thm:lspace-alexander}, we can group the nonzero monomials of $\Delta_K(t)$ as
\[ \Delta_K(t) = \left(\sum_{i=1}^k (t^{a_i} - t^{b_i})\right) + t^{-g}, \]
where $g = a_1 > b_1 > a_2 > b_2 > \dots > a_k > b_k = 1-g$.  Then $\lambda > 1$ implies that $\lambda^{a_i} > \lambda^{b_i}$ for each $i$, so the sum is strictly positive and $\Delta_K(\lambda) > \lambda^{-g}$.
\end{proof}

The following additional restriction on Alexander polynomials of L-space knots is a special case of a theorem of Krcatovich \cite{krcatovich}.

\begin{proposition} \label{prop:apply-krcatovich}
Let $K$ be an L-space knot of genus $g$.  Then its Alexander polynomial does not have the form
\[ \Delta_K(t) = \sum_{j=0}^{2k} (-1)^j t^{g-j} + O(t^{g-2k-2}), \]
where $t^{g-2k-1}$ is the highest-degree term below $t^g$ with coefficient zero, for any integer $k < g$.
\end{proposition}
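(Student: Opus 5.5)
The plan is to derive this from Krcatovich's theorem on the ``reduced'' Alexander polynomial of an L-space knot, which reads off the gaps in the semigroup-like structure at the top of $\Delta_K(t)$. Recall the setup. By Theorem~\ref{thm:lspace-alexander} we may write
\[ \Delta_K(t) = \sum_{i=0}^{2n} (-1)^i t^{n_i}, \qquad g = n_0 > n_1 > \dots > n_{2n} = -g. \]
Krcatovich \cite{krcatovich} shows that the formal power series $\Delta_K(t)\cdot t^{-g}/(1-t^{-1})$, after the substitution $s = t^{-1}$, has coefficients in $\{0,1\}$. He moreover shows that the ``reduced'' series obtained by dividing by $(1-s)$ once more, which is essentially the generating function for a subset closed under a weak additivity property, cannot have its first gap pattern be of certain forms. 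The specific statement I would invoke is his result that if the top of $\Delta_K$ is $t^g - t^{g-1} + t^{g-2} - \dots$, then the first run of consecutive alternating terms must be followed by a gap compatible with $1+s+\dots$ and the ``next term'' structure. Concretely, Krcatovich proves that if the top $2k+1$ terms are consecutive alternating, then $t^{g-2k-1}$ must appear with coefficient $-1$. Equivalently, the semigroup generated by $1$-step jumps cannot stop at an even position.

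In the proof itself I will argue directly in terms of the gap function. Set $s=t^{-1}$ and consider $P(s) = s^{g}\Delta_K(t)/(1-s) = \sum_{m\ge 0} \chi_S(m) s^m$, where $S\subset \Z_{\ge 0}$ is the set recorded by the staircase complex. The hypothesized top form $\sum_{j=0}^{2k}(-1)^j s^j + O(s^{2k+2})$ translates into the statement that $0,2,4,\dots,2k \in S$, that $1,3,\dots,2k-1\notin S$, and that $2k+1 \notin S$, with the run $2k+1$ absent. So $2k+2$ would be the next element or later.

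The key input is then Krcatovich's theorem that the set $S$ associated to an L-space knot satisfies: if $a$ is the smallest positive element of $S$ (here $a=2$), then $S$ is closed under adding $a$ up to the conductor $2g$. Applying this with $a=2$ and $2k\in S$ gives $2k+2 \in S$. Combined with $2k+1\notin S$, the coefficient of $s^{2k+1}$ in $\Delta_K = (1-s)P$ is $\chi_S(2k+1)-\chi_S(2k) = -1$, and the coefficient of $s^{2k+2}$ is $+1$. This contradicts the assumption that $t^{g-2k-1}$ has coefficient zero. The condition $k<g$ ensures that we stay below the conductor, so the closure property applies.

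The main obstacle is matching conventions precisely: getting the correct form of Krcatovich's closure statement, and the indexing between $t^{g-j}$ and $s^j$. I would first verify these against $T_{2,2g+1}$, where all coefficients are nonzero, and against $T_{3,4}$, where the gap pattern starts at $j=1$.
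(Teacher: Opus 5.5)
There is a genuine gap; in fact there are two.

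\textbf{The hypothesis is mistranslated.} Since $s^g\Delta_K=(1-s)P(s)$, the coefficient of $s^m$ in $s^g\Delta_K$ is $\chi_S(m)-\chi_S(m-1)$. The alternating run $1-s+s^2-\dots+s^{2k}$ does give $0,2,\dots,2k\in S$ and $1,3,\dots,2k-1\notin S$. But the coefficient of $s^{2k+1}$ (that is, of $t^{g-2k-1}$) is assumed to vanish, so $\chi_S(2k+1)=\chi_S(2k)=1$. Hence $2k+1\in S$, not $2k+1\notin S$. Your final ``contradiction'', that this coefficient equals $-1$, follows entirely from the incorrect assertion $2k+1\notin S$. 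The closure property is never actually used, so the argument is circular and never engages with the hypothesis.

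\textbf{The key input is not available.} The property you attribute to Krcatovich is that $S$ is closed under adding its smallest positive element below the conductor $2g$. This is not his theorem, and it is false for L-space knots in general. For the L-space knot $P(-2,3,7)$ one computes
\[ S=\{0,3,5,7,8,10,11,\dots\}. \]
Its smallest positive element is $3$, yet $6\notin S$ even though $6<10=2g$.

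\textbf{What the paper uses instead.} The correct input is Krcatovich's counting bound. Write $S=\{s_0<s_1<\cdots\}$; in the paper's notation $s_j=g-a_j$. Then $s_j\ge 2j$ for $j<g$ and $s_j=g+j$ for $j\ge g$, i.e.\ $a_j\le g-2j$ and $a_j=-j$. With the correct translation,
\[ S\cap[0,2k+1]=\{0,2,\dots,2k,2k+1\}, \]
so $s_{k+1}=2k+1<2(k+1)$. This contradicts the bound. In the edge case $k+1=g$ it contradicts $s_g=2g$ instead. That count is the paper's entire proof.
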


\begin{proof}
Theorem~\ref{thm:lspace-alexander} implies that we can write $\Delta_K(t)$ in the form
\[ \Delta_K(t) = (1-t^{-1})\sum_{j=0}^\infty t^{a_j} = (1-t^{-1})(t^{a_0} + t^{a_1} + t^{a_2} + \dots) \]
for some strictly decreasing sequence $g = a_0 > a_1 > a_2 > \dots$ of integers.  Krcatovich \cite[Theorem~1.6]{krcatovich} proved that these integers must satisfy $a_j = -j$ for all $j \geq g$, and also $a_j \leq g - 2j$ for all $j < g$.  Now if $\Delta_K(t)$ had the form that we are trying to rule out, then this would mean that $a_j = g - 2j$ for all $j \leq k$, but also that
\[ a_{k+1} = g - 2k - 1 > g - 2(k+1), \]
which is impossible.
\end{proof}

We will study the monodromy of the fibration of an L-space knot complement, for which we need to know the following.

\begin{theorem}[{\cite{ghiggini,ni-hfk}}] \label{thm:lspace-fibered}
L-space knots are fibered.
\end{theorem}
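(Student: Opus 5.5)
This is a theorem the paper cites from Ghiggini and Ni, so the plan is to assemble it from knot Floer homology results rather than to prove anything new. Let $K$ be an L-space knot of genus $g \geq 1$, so some surgery $S^3_r(K)$ with $r>0$ is an L-space. The first step is to pass to a large integral surgery. Ozsv\'ath and Szab\'o \cite{osz-lens} show that if $S^3_r(K)$ is an L-space for some $r>0$, then so is $S^3_n(K)$ for every integer $n \geq 2g-1$. For such $n$, the large surgery formula identifies $\hfhat(S^3_n(K))$ in each spin$^c$ structure with the homology of a subquotient complex of $\mathit{CFK}^\infty(K)$. Requiring every one of these groups to have rank one forces the full knot Floer complex to be a staircase. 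In particular this gives the conclusion of Theorem~\ref{thm:lspace-alexander}: $\hfkhat(K,a)$ has rank at most one in each Alexander grading $a$, and it is nonzero in the top grading $a=g$ \cite{osz-genus}.

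The second step is to upgrade ``$\hfkhat(K,g)$ is nonzero'' to ``$\hfkhat(K,g) \cong \Z$''. The staircase structure gives this directly, because the top-graded group corresponds to the endpoint of the staircase and has rank exactly one. As a consistency check, $\Delta_K(t)$ is monic of degree $g$ by Theorem~\ref{thm:lspace-alexander}, which matches $\chi(\hfkhat(K,g)) = \pm 1$.

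The third step, which is the main obstacle and the deep input, is the fibredness detection theorem. Ghiggini proved it for $g=1$ and Ni in general: if $\operatorname{rank}\hfkhat(K,g) = 1$, then $K$ is fibered. Ni's argument uses sutured manifold hierarchies. If the complement of a minimal genus Seifert surface $F$ were not a product sutured manifold, one finds a product annulus decomposition and a nontrivial taut decomposition. Juh\'asz's sutured Floer homology, or Ni's direct Heegaard diagram argument, then shows that $\hfkhat(K,g) \cong \mathit{SFH}(S^3(F))$ has rank at least two. This rules out a non-product complement, so the complement is a product and $K$ is fibered.

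Since the paper only quotes this result, I would present it as the combination of the rank-one computation from the first two steps with Ni's theorem \cite[Corollary~1.3]{ni-hfk} (and \cite{ghiggini} for genus one), rather than reproving the detection result.
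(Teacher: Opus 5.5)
Your proposal is correct and is the argument the paper relies on. The paper gives no proof of its own; the content of \cite{ghiggini,ni-hfk} here is that an L-space surgery forces $\hfkhat(K,g) \cong \Z$ (rank at most one by \cite{osz-lens}, nonzero by \cite{osz-genus}), after which the fibredness detection theorems of Ghiggini (for $g=1$) and Ni apply. Your sketch of Ni's sutured-manifold argument is only impressionistic, but you cite that result rather than reprove it, so nothing is missing.
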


\begin{theorem} \label{thm:lspace-veering}
L-space knots have right-veering monodromy.
\end{theorem}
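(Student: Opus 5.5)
This is a known result assembled from the literature, so I would prove it by a short chain of citations rather than from scratch. The chain goes through Heegaard Floer tau/contact invariants and Honda--Kazez--Mati\'c's characterization of tightness.

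First, let $K$ be an L-space knot of genus $g \geq 1$. By Theorem~\ref{thm:lspace-fibered}, $K$ is fibered with fiber $\Sigma_{g,1}$ and some monodromy $h$. Some positive surgery $S^3_r(K)$ is an L-space. By the standard large-surgery argument of Ozsv\'ath--Szab\'o \cite{osz-lens}, $S^3_n(K)$ is then an L-space for every integer $n \geq 2g-1$. I would then use \cite[Theorem~1.2]{osz-lens} together with the genus detection \cite{osz-genus} to conclude that $\tau(K) = g$; equivalently, the top-Alexander-grading generator of $\hfkhat(K)$ survives in $\hfhat(S^3)$.

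Second, I would invoke Hedden's theorem. For a fibered knot, $\tau(K)=g$ holds exactly when the contact structure $\xi_{(\Sigma,h)}$ supported by the open book with page $\Sigma_{g,1}$ and monodromy $h$ is the tight contact structure $\xi_{std}$ on $S^3$. Its Ozsv\'ath--Szab\'o contact invariant is the image of the top generator, which is nonzero precisely when $\tau = g$. So the open book $(\Sigma_{g,1},h)$ supports a tight contact structure.

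Third, I would apply Honda--Kazez--Mati\'c: any open book whose monodromy is not right-veering supports an overtwisted contact structure. Since $\xi_{(\Sigma,h)}$ is tight, $h$ must be right-veering.

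The main obstacle is the middle step, namely making sure the identification $\tau(K)=g$ is stated for positive L-space surgeries with our sign conventions. A negative L-space surgery would instead give $\tau(K)=-g$ and left-veering monodromy. I would address this by citing Hedden's explicit statement \cite{hedden-watson}-style: for a knot with a positive L-space surgery, $\tau = g$. Alternatively, I would simply cite the result directly in the literature, for instance Hedden's ``Notions of positivity and the Ozsv\'ath--Szab\'o concordance invariant'' combined with \cite{ni-fixed-note}, which record that L-space knots are fibered and strongly quasipositive with right-veering monodromy.
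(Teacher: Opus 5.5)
Your argument is correct and is essentially the paper's: both show, via Hedden, that the open book of an L-space knot supports the tight contact structure on $S^3$, and then apply Honda--Kazez--Mati\'c \cite[Theorem~1.1]{hkm-veering}. The paper phrases the middle step through strong quasipositivity (\cite[Corollary~1.4 and Proposition~2.1]{hedden-positivity}), whereas you unpack it as $\tau(K)=g$ plus nonvanishing of the contact invariant; that is how Hedden proves those results anyway.
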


\begin{proof}
The fibration of the complement of an L-space knot supports a contact structure on $S^3$, and if the supported contact structure is tight then Honda, Kazez, and Mati\'c \cite[Theorem~1.1]{hkm-veering} proved that the monodromy must be right-veering.  Now L-space knots are strongly quasipositive \cite[Corollary~1.4]{hedden-positivity}, which means that their associated open books support the tight contact structure on $S^3$ \cite[Proposition~2.1]{hedden-positivity}, and thus have right-veering monodromies as claimed.
\end{proof}

\begin{theorem}[\cite{ni-fixed-note,ghiggini-spano}] \label{thm:lspace-fixed}
Let $K$ be a hyperbolic L-space knot of genus $g > 1$, and suppose that its monodromy is freely isotopic to a pseudo-Anosov map
\[ \phi : \Sigma_{g,1} \to \Sigma_{g,1}. \]
Then $\phi$ has no fixed points.
\end{theorem}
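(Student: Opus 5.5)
The plan is to compare the knot Floer homology of $K$ in the next-to-top Alexander grading with a fixed-point Floer homology of the monodromy. This is the route of Ni and of Ghiggini--Spano.

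First I pin down the knot Floer side. By Theorem~\ref{thm:lspace-alexander}, the coefficient of $t^{g-1}$ in $\Delta_K(t)$ is $-1$. For an L-space knot, $\hfkhat(K,a)$ has rank at most one in each Alexander grading and is determined by $\Delta_K$ (this is \cite{osz-lens}). Hence
\[ \hfkhat(K, g-1) \cong \F. \]

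Next I invoke the identification of Ghiggini--Spano. For a fibered knot $K$ with monodromy $h$, the group $\hfkhat(K,g-1)$ is isomorphic to the symplectic (fixed-point) Floer homology $\mathit{HF}^\sharp(h)$. Here $h$ is perturbed near $\partial\Sigma_{g,1}$ by a small positive rotation. The relevant fact for that perturbation is Theorem~\ref{thm:lspace-veering}: the monodromy of $K$ is right-veering, so the perturbation is compatible with it.

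Then I compute $\mathit{HF}^\sharp(h)$ using the pseudo-Anosov representative $\phi$. Following Cotton-Clay's computation for pseudo-Anosov maps, the chain complex can be built from the following generators:
\begin{enumerate}
\item the fixed points of $\phi$ in the interior of $\Sigma_{g,1}$, where each singular fixed point is replaced by the appropriate collection of nondegenerate fixed points of a smoothing;
\item fixed points created by the twist region near the boundary.
\end{enumerate}
For a pseudo-Anosov map, distinct fixed points lie in distinct Nielsen classes, so the differential vanishes between them. The rank therefore equals a weighted count of fixed points.

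The boundary contribution depends on how $\phi$ acts on the prongs meeting $\partial\Sigma_{g,1}$. Its fractional Dehn twist coefficient is positive by right-veering, and this should yield exactly one generator. Combining this with the previous steps gives
\[ \operatorname{rank} \hfkhat(K,g-1) = 1 + \#\{\text{interior contributions}\}, \]
and every interior fixed point contributes at least one. Since the left side equals $1$, $\phi$ has no interior fixed points.

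It remains to exclude fixed points on $\partial\Sigma_{g,1}$ itself, that is, periodic prongs at the boundary. A positive fractional Dehn twist coefficient means $\phi$ rotates the boundary prongs nontrivially. This should rule out boundary fixed points, at least when the coefficient is not an integer. The integer case would need separate treatment; I expect $g>1$ and hyperbolicity to be what excludes it.

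I expect the main obstacle to be this bookkeeping at the boundary and at singular points. One must check that a singular fixed point with rotated prongs, or the boundary twist region, contributes exactly the predicted number of generators, with no cancellation. One must also handle separately the case where the fractional Dehn twist coefficient is $1$, presumably via the genus hypothesis. I would first try to quote the Cotton-Clay and Ghiggini--Spano computations directly and verify only the boundary contribution by hand.
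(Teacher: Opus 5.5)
You are following the same route as the results the paper cites: rank one in Alexander grading $g-1$, the Ghiggini--Spano identification with fixed point Floer homology, and Cotton-Clay's computation for pseudo-Anosov maps. The paper gives no proof of its own beyond the citation and the remark that $\hfkhat(K,g-1)\cong\Z$. Your interior bookkeeping is fine: interior fixed points of $\phi$ are isolated, lie in distinct Nielsen classes and have nonzero index, so each contributes at least one generator. Right-veering is not what makes the Ghiggini--Spano identification work, though. It matters only through the sign of the fractional Dehn twist coefficient $c$ in the boundary count.

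The gap is your treatment of $c$. Two of your claims need $0<c<1$, not merely $c>0$: that the boundary contributes exactly one generator, and that $\phi$ has no fixed points on the blown-up boundary circle. Your plan to exclude integral $c$ using $g>1$ and hyperbolicity would fail, because neither hypothesis constrains $c$. The missing input is Lemma~\ref{lem:degeneracy}. Its proof uses only Gabai--Oertel, Theorem~\ref{thm:lspace-veering} and Honda--Kazez--Mati\'c, not the statement at hand. The meridional filling forces $c=1/a_0$, and $a_0=1$ would give $S^3$ a taut foliation. So $c=1/a_0\le\frac12$ and the boundary prongs are cyclically rotated, which settles your last paragraph. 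That paragraph was not needed anyway: the statement, as used in Lemma~\ref{lem:stratum-4g-2}, concerns interior fixed points.

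The ``should'' in your boundary count can then be replaced by an Euler characteristic check that does not depend on perturbation conventions. The Euler characteristic $\chi(\hfkhat(K,g-1))$ is the coefficient $-1$ of $t^{g-1}$ in $\Delta_K$, and this coefficient equals $-\operatorname{tr}\phi_*$. So the interior fixed points have total index $L(\phi)=1-\operatorname{tr}\phi_*=0$. The boundary part therefore has odd Euler characteristic, hence at least one generator. The boundary part also cannot cancel against interior classes. A lift of $\phi$ preserving a lift of the puncture fixes that parabolic point at infinity. A lift fixing an interior point fixes at most the endpoints of its prongs, and none of these runs into the puncture. This gives $\operatorname{rank}\hfkhat(K,g-1)\ge 1+\#\operatorname{Fix}(\phi)$, which is all the argument needs.
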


\begin{remark}
Another route to Theorem~\ref{thm:lspace-veering} is to observe that L-space knots satisfy \[ \hfkhat(K,g-1) \cong \Z, \] because this portion of the knot Floer homology has rank at most 1 by \cite[Theorem~1.2]{osz-lens} but is nonzero by \cite[Corollary~9]{hedden-watson}.  A theorem of Ni \cite[Theorem~A.1]{ni-exceptional} now proves that the monodromy is veering, meaning either left-veering or right-veering.  

This argument applies more generally to so-called \emph{ffpf} knots, which are fibered knots satisfying \[ \hfkhat(K,g-1;\F_2) \cong \F_2, \] as in \cite[Lemma~2.8]{bs-traces}.  Hyperbolic ffpf knots also satisfy the conclusion of Theorem~\ref{thm:lspace-fixed}, namely that their monodromies have no fixed points \cite[Proposition~2.9]{bs-traces}.
\end{remark}

\section{Monodromies of knots with lens space surgeries} \label{sec:monodromy}

Let $K \subset S^3$ be a fibered hyperbolic knot of genus $g \geq 1$.  Then its monodromy is freely isotopic to a pseudo-Anosov homeomorphism
\[ \phi: \Sigma_{g,1} \to \Sigma_{g,1} \]
\cite{thurston-diffeomorphisms}.  As a pseudo-Anosov map, the monodromy $\phi$ preserves a transverse pair of transversely measured singular foliations $(\cF_s, \mu_s)$ and $(\cF_u, \mu_u)$, while contracting and expanding the respective measures $\mu_s$ and $\mu_u$ by the dilatation $\lambda(\phi) > 1$.  These foliations have some number $a_0 \geq 1$ of prongs at the puncture of $\Sigma_{g,1}$, as well as a collection of $k \geq 0$ interior singularities with varying numbers $a_1 \geq a_2 \geq \dots \geq a_k \geq 3$ of prongs; these numbers satisfy the Euler--Poincar\'e formula
\begin{equation} \label{eq:euler-poincare}
\sum_{i=0}^k (a_i - 2) = 4g - 4
\end{equation}
\cite[Proposition~5.1]{flp}.  We say that $\phi$ belongs to the stratum $(a_0; a_1,\dots,a_k)$.

\begin{lemma} \label{lem:even-prong-orientable}
Let $K$ be a fibered hyperbolic knot in a $\Z/2\Z$-homology sphere $Y$, and suppose that its pseudo-Anosov monodromy $\phi : \Sigma_{g,1} \to \Sigma_{g,1}$ belongs to a stratum $(a_0; a_1,\dots,a_k)$ where all of the $a_j$ are even.  Then the invariant foliations $\cF_s$ and $\cF_u$ of $\phi$ are both orientable.
\end{lemma}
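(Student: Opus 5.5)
Our plan is to construct the orientation double cover of $\cF_s$, show that it is trivial, and deduce the same for $\cF_u$, which is transverse to $\cF_s$.

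\textbf{Step 1: the obstruction class.} Since every singularity has an even number of prongs, going once around a small loop about a singularity (or about the puncture) preserves a local orientation of $\cF_s$. Hence local orientations of the leaves, away from the singularities, define a double cover. This cover extends over the filled-in singular points, and so it is classified by a class
\[ w = w_1(\cF_s) \in H^1(\Sigma_{g,1};\Z/2\Z). \]
The foliation $\cF_s$ is orientable exactly when $w = 0$. Because $\phi(\cF_s) = \cF_s$, the double cover is carried to itself by $\phi$, so
\[ \phi^* w = w. \]
Suppose $w \neq 0$. Then $1 - \phi^*$ has nontrivial kernel on $H^1(\Sigma_{g,1};\Z/2\Z)$.

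\textbf{Step 2: linking with the homology of $Y$.} The mapping torus of $\phi$ is the knot complement $Y \setminus N(K)$. The Wang sequence for this fibration, with $\Z/2\Z$ coefficients, reads
\[ H^0(\Sigma) \to H^1(Y\setminus N(K)) \to H^1(\Sigma) \xrightarrow{1-\phi^*} H^1(\Sigma) \to H^2(Y\setminus N(K)) \to \cdots . \]
Since $Y$ is a $\Z/2\Z$-homology sphere, Alexander duality gives
\[ H^1(Y\setminus N(K);\Z/2\Z) \cong \Z/2\Z, \]
and this class is the pullback of the generator of $H^1(S^1)$. Therefore the map $H^1(Y\setminus N(K)) \to H^1(\Sigma)$ (restriction to a fiber) is zero, so $1-\phi^*$ is injective on $H^1(\Sigma;\Z/2\Z)$. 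This contradicts Step 1, so $w = 0$.

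An equivalent way to say this: $\det(1-\phi_*)$ computes $\Delta_K(1)$, up to sign, in terms of $|H_1(Y)|$, and this is odd. We will use whichever formulation is cleaner, but the exact-sequence version avoids any normalization issues.

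\textbf{Step 3: the other foliation.} The same argument applies verbatim to $\cF_u$. Alternatively, $\cF_u$ is transverse to $\cF_s$ with the same singular points, so on the oriented surface an orientation of $\cF_s$ induces one of $\cF_u$ by rotating by a quarter turn.

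The main obstacle is the rigor of Step 1 near the singularities and near the puncture. First, we must check that the orientation cover is a genuine double cover of the whole surface $\Sigma_{g,1}$, i.e.\ that the holonomy around each even-pronged singularity is trivial. Second, we must check that $\phi$ preserves the cover, rather than merely preserving it up to an automorphism of the bundle; this holds because a class $w$ determines the cover up to isomorphism, and $\phi$ is a homeomorphism preserving the foliation.
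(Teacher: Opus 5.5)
Your proof is correct and follows the paper's argument. The class $w_1(\cF)$ extends over the even-pronged singularities to a $\phi^*$-invariant class in $H^1(\Sigma_{g,1};\Z/2\Z)$, and it is killed because $1-\phi^*$ is invertible mod $2$ when $Y$ is a $\Z/2\Z$-homology sphere. The only difference is how you get that invertibility: the paper uses that $\det(1-\phi^*) = \Delta_K(1) = \pm|H_1(Y)|$ is odd, while your $\Z/2\Z$ Wang sequence gives it directly without the Alexander polynomial.
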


\begin{proof}
Let $\cF$ denote one of the invariant foliations, with singular locus $S = \{p_1,\dots,p_k\}$.  Then the first Stiefel--Whitney class
\[ w_1\left(\cF|_{\Sigma_{g,1} \setminus S}\right) \]
vanishes on a small loop around each $p_i$, since the corresponding number $a_i$ of prongs is even.  It thus extends to an orientation class
\[ o(\cF) \in H^1(\Sigma_{g,1}; \Z/2\Z) \]
which is zero if and only if $\cF$ is orientable.  Assuming then that $\cF$ is non-orientable, this class is preserved by $\phi$ since $\cF$ is, so it is a nonzero element of
\[ \ker\left(1 - \phi^*: H^1(\Sigma_{g,1}; \Z/2\Z) \to H^1(\Sigma_{g,1}; \Z/2\Z)\right). \]
The Alexander polynomial $\Delta_K(t)$ is the characteristic polynomial of $\phi^*$, so it follows that
\[ \Delta_K(1) = \det(1-\phi^*) \equiv 0 \pmod{2}. \]
But then $|H_1(Y; \Z)| = |\Delta_K(1)|$ is even and this is a contradiction.
\end{proof}

\begin{lemma} \label{lem:even-prong-dilatation}
Let $K \subset S^3$ be a fibered hyperbolic knot, and suppose that its pseudo-Anosov monodromy $\phi$ belongs to a stratum $(a_0;a_1,\dots,a_k)$ where all of the $a_j$ are even.  If $\lambda > 1$ is the dilatation of $\phi$, then either $\lambda$ or $-\lambda$ is a root of the Alexander polynomial $\Delta_K(t)$.  If $K$ is also an L-space knot, then $\Delta_K(-\lambda) = 0$.  (In particular, the monodromy of an L-space knot reverses the orientation of its invariant foliations.)
\end{lemma}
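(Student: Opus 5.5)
By Lemma~\ref{lem:even-prong-orientable}, applied with $Y = S^3$, the invariant foliations $\cF_u$ and $\cF_s$ are both orientable. The plan is to use this orientation to produce a cohomology class on which $\phi^*$ acts by $\pm\lambda$, and then to read off the sign using Corollary~\ref{cor:alexander-roots-negative}.

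\textbf{Step 1: the eigenclass.} Fix an orientation of $\cF_u$. Its transverse measure $\mu_u$ then becomes a closed $1$-form $\omega$ on $\Sigma_{g,1}$ minus the singular set: integrate $\mu_u$ along a transverse arc, with sign determined by the transverse orientation. Since the foliation is orientable, this form extends across the singularities and the boundary, so it defines a class $[\omega] \in H^1(\Sigma_{g,1};\R)$.

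\textbf{Step 2: $[\omega]$ is nonzero.} Choose a closed curve $\gamma$ that is transverse to $\cF_u$ and always crosses it in the positive direction. Such a curve exists: follow a leaf of $\cF_s$, which meets $\cF_u$ transversely with a consistent sign because both foliations are oriented, and close it up by a recurrence argument. Then $\int_\gamma \omega > 0$, so $[\omega] \neq 0$.

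\textbf{Step 3: the action of $\phi^*$.} Since $\phi$ preserves $\cF_u$ and scales $\mu_u$ by $\lambda$, we have $\phi^*\omega = \epsilon\lambda\,\omega$. Here $\epsilon = \pm1$ records whether $\phi$ preserves or reverses the chosen orientation; this sign is well defined because $\phi$ maps the oriented foliation to itself up to a global sign, and $\Sigma_{g,1}$ is connected. Hence $\epsilon\lambda$ is an eigenvalue of $\phi^*$ on $H^1(\Sigma_{g,1};\R)$. The Alexander polynomial $\Delta_K(t)$ is the characteristic polynomial of $\phi^*$ (as used in Lemma~\ref{lem:even-prong-orientable}), so $\Delta_K(\epsilon\lambda) = 0$. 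This gives the first claim.

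\textbf{Step 4: the L-space case.} If $K$ is an L-space knot, Corollary~\ref{cor:alexander-roots-negative} gives $\Delta_K(\lambda) > 0$ because $\lambda > 1$. So $\lambda$ is not a root, which forces $\epsilon = -1$ and $\Delta_K(-\lambda) = 0$. Moreover, $\epsilon = -1$ means exactly that $\phi$ reverses the orientation of $\cF_u$. The same argument with $\mu_s$, which is scaled by $\lambda^{-1}$, gives an eigenvalue $\epsilon'\lambda^{-1}$. By the symmetry $\Delta_K(t) \doteq \Delta_K(t^{-1})$, this leads to the same conclusion for $\cF_s$.

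\textbf{Main obstacle.} The one delicate point is Step 2, that $[\omega] \neq 0$. The standard reference is Thurston's construction, or the lemma that an orientable measured foliation defines a nonzero cohomology class. If the closed-transversal argument is awkward, I would fall back on the following: if $[\omega] = 0$, then $\omega = df$ for a function $f$ with no critical points in the interior, which is incompatible with the recurrence of leaves of a pseudo-Anosov foliation.
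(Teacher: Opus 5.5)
Your proposal is correct and is essentially the paper's proof. Lemma~\ref{lem:even-prong-orientable} gives orientability, the transverse measure of $\cF_u$ gives a nonzero class $\alpha$ with $\phi^*\alpha = \pm\lambda\alpha$ so that $\pm\lambda$ is a root of $\Delta_K(t)$, and Corollary~\ref{cor:alexander-roots-negative} excludes $+\lambda$ for L-space knots. Your added details (why the class is nonzero, and the $\cF_s$ case via $\Delta_K(t^{-1}) = \Delta_K(t)$) fill in what the paper leaves implicit; one caveat is that in your fallback, $f$ can have saddle-type critical points at even-pronged interior singularities, so the correct contradiction is that $f$ is constant on the dense leaves of $\cF_u$, which forces $\omega = 0$.
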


\begin{proof}
Let $(\cF_u,\mu_u)$ be the unstable invariant foliation of $\phi$.  Lemma~\ref{lem:even-prong-orientable} guarantees that $\cF_u$ is orientable, hence the transverse measure $\mu_u$ defines a nonzero class $\alpha \in H^1(\Sigma_{g,1}; \R)$ such that
\[ \phi^*\alpha = \pm \lambda \alpha, \]
with the sign depending on whether $\phi$ preserves orientation or not.  Then $\pm\lambda$ is a root of $\det(t-\phi^*) = \Delta_K(t)$, as claimed.

Now if $K$ is an L-space knot then Corollary~\ref{cor:alexander-roots-negative} says that $\Delta_K(\lambda)$ is strictly positive, so in this case $-\lambda$ must be a root.
\end{proof}

We record the following consequence of Lemma~\ref{lem:even-prong-dilatation}, though we will not need it here.

\begin{proposition} \label{prop:lspace-finite-if-even}
For a fixed integer $g \geq 1$, there are only finitely many hyperbolic L-space knots of genus $g$ whose monodromies belong to strata of the form $(a_0;a_1,\dots,a_k)$ where all of the $a_j$ are even.
\end{proposition}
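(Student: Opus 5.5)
We bound the dilatation $\lambda$ above in terms of $g$ alone, and then appeal to the finiteness of pseudo-Anosov maps of bounded dilatation on a fixed surface.

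First, the dilatation. Let $K$ be such a knot, with pseudo-Anosov monodromy $\phi$ and dilatation $\lambda > 1$. By Lemma~\ref{lem:even-prong-dilatation}, $\Delta_K(-\lambda) = 0$, so $-\lambda$ is a real root of a polynomial determined up to finitely many choices. Indeed, Theorem~\ref{thm:lspace-alexander} says $\Delta_K(t)$ has degree $g$ with all coefficients in $\{-1,0,1\}$. Hence only finitely many Alexander polynomials occur for fixed $g$, and $\lambda$ lies in a finite set $\Lambda_g \subset (1,\infty)$. Concretely, $t^g\Delta_K(t)$ is a monic integer polynomial of degree $2g$ with coefficients of absolute value at most $1$. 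Every root of such a polynomial has modulus less than $2$, which gives a uniform bound $\lambda < 2$. Either description suffices.

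Second, finiteness of the monodromies. We use the standard fact that for a fixed surface $\Sigma_{g,1}$ and a constant $C$, there are only finitely many conjugacy classes in the mapping class group of pseudo-Anosov maps with dilatation at most $C$. One way to see this: $\log\lambda$ is the translation length of $\phi$ on Teichm\"uller space. Such a $\phi$ therefore preserves a Teichm\"uller geodesic whose image in moduli space is a closed geodesic of length $\log\lambda$. Closed geodesics of bounded length in moduli space come in finitely many classes, because the mapping class group acts properly discontinuously. Alternatively, one can argue combinatorially via train tracks: a bounded-dilatation pseudo-Anosov has a train track representative whose transition matrix has bounded entries, and there are only finitely many train tracks up to homeomorphism.

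We then pass from free isotopy classes to the monodromy itself. The actual monodromy fixes the boundary and differs from $\phi$ by a power of the boundary Dehn twist. This extra data is pinned down by the requirement that the closed-up manifold is $S^3$ with $K$ as binding, or equivalently by the fractional Dehn twist coefficient. Right-veering (Theorem~\ref{thm:lspace-veering}) and the open book being $S^3$ should leave only finitely many choices. A cleaner route avoids this step entirely: a fibered knot complement is determined by the conjugacy class of its monodromy up to free isotopy on the punctured surface. Knots in $S^3$ are then determined by their complements by Gordon--Luecke, so finitely many conjugacy classes give finitely many knots.

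The main obstacle is justifying the finiteness of bounded-dilatation pseudo-Anosov classes on $\Sigma_{g,1}$, treated as a punctured surface, in a citable form. I would cite it as a standard consequence of Thurston's theory and Teichm\"uller geometry, with Ivanov or Arnoux--Yoccoz as references, rather than reprove it. Note also that the even-prong hypothesis enters only through Lemma~\ref{lem:even-prong-dilatation}.
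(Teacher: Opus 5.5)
Your proposal is correct and follows the paper's proof essentially step for step. The paper likewise uses finitely many admissible Alexander polynomials together with Lemma~\ref{lem:even-prong-dilatation} to bound $\lambda$, cites Ivanov for finiteness of bounded-dilatation pseudo-Anosov conjugacy classes on $\Sigma_{g,1}$, and finishes with Gordon--Luecke. Your ``cleaner route'' is exactly the paper's, so the fractional-Dehn-twist detour is unnecessary. One loose point in your heuristic for Ivanov's theorem: proper discontinuity alone does not give finiteness, since moduli space is noncompact, and one also needs that a closed geodesic of bounded length meets a fixed compact part of moduli space. Since you, like the paper, simply cite the result, this does not affect the argument.
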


\begin{proof}
Let $K$ be such a knot, and let $\lambda > 1$ be the dilatation of its monodromy.  Theorem~\ref{thm:lspace-alexander} says that there are only finitely many possibilities for the Alexander polynomial $\Delta_K(t)$, and by Lemma~\ref{lem:even-prong-dilatation} we know that $-\lambda$ must be a root of $\Delta_K(t)$, so for fixed $g$ the dilatation $\lambda$ can only take one of finitely many values.  Let $C = C(g)$ be the largest such value.  Then up to conjugacy there are only finitely many pseudo-Anosov maps $\Sigma_{g,1} \to \Sigma_{g,1}$ with dilatation at most $C$ \cite{ivanov-bounded-dilatation}, and one of their mapping tori is homeomorphic to $S^3 \setminus N(K)$.  This means that there are only finitely many possibilities for the exterior of $K$, hence finitely many possible values of $K$ up to isotopy \cite{gordon-luecke-complement}.
\end{proof}

\begin{lemma} \label{lem:degeneracy}
Let $K \subset S^3$ be a hyperbolic L-space knot, with pseudo-Anosov monodromy $\phi$.  Then the pseudo-Anosov suspension flow of $\phi$ has degeneracy slope $\frac{a_0}{1}$, where $a_0 \geq 2$ is the number of boundary prongs of the invariant foliations for $\phi$.  Moreover, if some $r$-surgery on $K$ is reducible or has finite fundamental group, then $\Delta(\frac{a_0}{1}, r) \leq 1$.\footnote{Here $\Delta$ denotes the distance between two slopes on the torus, rather than an Alexander polynomial.}
\end{lemma}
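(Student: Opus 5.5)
The degeneracy slope is computed by describing the boundary of the suspension flow concretely. Let $M = S^3 \setminus N(K)$, identified with the mapping torus of $\phi$, and let $\mu$ and $\lambda_K$ be the meridian and Seifert longitude. The boundary torus $\partial M$ is foliated by the suspension of $\partial\Sigma_{g,1}$. The $a_0$ prongs of $\cF_s$ meet the boundary circle in $a_0$ points. Under the suspension flow these sweep out closed orbits on $\partial M$, the prong curves, and the degeneracy slope is by definition their common slope.

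\textbf{Step 1: the boundary slope.} Write the monodromy as $h$, which is freely isotopic to $\phi$. The prong points on $\partial\Sigma_{g,1}$ are permuted by $\phi|_\partial$ through some rotation by $c/a_0$ of a full turn, and the isotopy from $h$ to $\phi$ contributes an additional twisting along the boundary measured by the fractional Dehn twist coefficient. Together these give a degeneracy slope $a_0/(a_0 c' + 1)$ or similar in terms of $(\mu,\lambda_K)$, where $c'$ is the fractional Dehn twist coefficient scaled by $a_0$. I would then argue that $c'$ is forced to be $0$, and equally that $\phi$ does not rotate the prongs. Suppose it did not fix each prong. Then in the usual Lefschetz/Nielsen analysis near the boundary, a rotation or nonzero twisting that is not a full turn means the prongs are cyclically permuted. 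The claim to prove is that the fractional Dehn twist coefficient is exactly $1/a_0$, meaning one prong step per traversal, with $\phi$ moving each prong to the next. I would establish this by combining two facts:
\begin{itemize}
\item Theorem~\ref{thm:lspace-veering} gives right-veering, hence a coefficient greater than $0$ (the coefficient is positive for pseudo-Anosov right-veering maps).
\item A coefficient $\ge 1$, or more precisely anything other than $1/a_0$, would force fixed points.
\end{itemize}
For the second point, a standard argument (as in the work around Theorem~\ref{thm:lspace-fixed} and Ni's fixed point note) shows that $h$ is an open book of $S^3$ with L-space surgery, so $\hfkhat(K,g-1)\cong\Z$, which corresponds to the Floer homology of $\phi$ with rank $1$. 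The boundary contribution alone equals $1$ precisely when the coefficient is $1/a_0$, whereas other values contribute differently or create interior fixed points. In the paper's conventions this yields a degeneracy slope of $a_0/1$ with respect to the framing, i.e.\ prong curves meeting the meridian... I expect the normalization here to be the main obstacle. Specifically, getting the orientation conventions right so that the slope comes out as $a_0/1$ rather than $1/a_0$ or $-a_0$ requires care. I would first try citing that the fractional Dehn twist coefficient of an ffpf/L-space knot is $1/a_0$ (Ni / Baldwin--Sivek style), then compute that each prong curve meets the longitude $a_0$ times and the meridian once.

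\textbf{Step 2: $a_0\ge 2$.} If $a_0=1$, the single prong would be fixed by $\phi$. That gives boundary-parallel fixed behavior which, after the blow-down, yields a fixed point, contradicting Theorem~\ref{thm:lspace-fixed}; alternatively, coefficient $1/1$ is an integer, which is impossible for... I would use the fixed point argument.

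\textbf{Step 3: the surgery bound.} Apply Gabai--Oertel / Gabai's essential lamination theory. The stable lamination of the suspension flow, with the degeneracy slope $d$, remains essential in Dehn fillings $M(r)$ whenever $\Delta(d,r)\ge 2$. Essential laminations force the filled manifold to be irreducible with universal cover $\R^3$, hence infinite fundamental group. So if $M(r)$ is reducible or has finite $\pi_1$, we get $\Delta(a_0/1,r)\le 1$.
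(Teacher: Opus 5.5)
There is a genuine gap in Step~1: nothing you say forces the degeneracy slope to be integral. Right-veering (Theorem~\ref{thm:lspace-veering} plus Honda--Kazez--Mati\'c) only tells you that $c(\phi) = k/a_0$ with $k \geq 1$. Your second ingredient is the claim that $\hfkhat(K,g-1) \cong \Z$ forces $c(\phi) = 1/a_0$ because the ``boundary contribution'' equals $1$ exactly in that case. You assert this rather than prove it, and it is not true in that form. For any $0 < c(\phi) < 1$ the prongs are moved off themselves, and the boundary behaves the same way whatever the value of $k$. So the rank-one condition cannot distinguish $c = 1/a_0$ from, say, $c = 2/a_0$; what it detects is the absence of interior fixed points, not the value of $c$. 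Step~1 is also internally inconsistent: you first say $\phi$ does not rotate the prongs and then that it moves each prong to the next.

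The missing idea is the Gabai--Oertel theorem you already use in Step~3, applied to the meridian. The $\frac{1}{0}$-filling is $S^3$, whose universal cover is not $\R^3$, so it contains no essential lamination, and hence $\Delta(d,\frac{1}{0}) \leq 1$. This forces $d$ to be either $\frac{1}{0}$ or an integral slope whose numerator is the number $a_0$ of boundary prongs, i.e.\ $d = \pm\frac{a_0}{1}$. Positivity of $c(\phi) = 1/d$ then selects $d = \frac{a_0}{1}$. So the integrality comes from the ambient manifold being $S^3$, not from Floer homology. The L-space hypothesis is only needed to fix the sign, and no delicate normalization is involved; this is exactly how the paper argues.

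Step~2 does not work as written either. If $a_0 = 1$, the fixed prong sits at the puncture, which is not a point of $\Sigma_{g,1}$, so Theorem~\ref{thm:lspace-fixed} gives no contradiction. You do not explain how ``blowing down'' would turn it into an honest fixed point of $\phi$. The paper instead notes that $a_0 = 1$ would give $c(\phi) = 1/a_0 = 1$. By Honda--Kazez--Mati\'c, a fractional Dehn twist coefficient of at least $1$ yields a taut foliation on the ambient manifold, which is impossible for $S^3$. Your Step~3 agrees with the paper's argument.
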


\begin{proof}
Let $d$ be the degeneracy slope of the suspension flow.  Gabai and Oertel \cite[Theorem~5.3]{gabai-oertel} proved that any Dehn filling of $S^3 \setminus N(K)$ of slope $s$ admits an essential lamination unless $\Delta(d, s) \leq 1$.  Manifolds whose universal cover is not homeomorphic to $\R^3$ do not admit essential laminations \cite[Theorem~6.1]{gabai-oertel}, though, so we can apply this to the meridional slope $\frac{1}{0}$ (whose filling is $S^3$) to get $\Delta(d, \frac{1}{0}) \leq 1$, and to any reducible or finite filling of slope $s = r$ to get $\Delta(d, r) \leq 1$.  

It remains to be seen that $d = \frac{a_0}{1}$, and that $a_0 \geq 2$.  The fact that $\Delta(d,\frac{1}{0}) \leq 1$ tells us that either $d = \frac{1}{0}$, or $d = \pm \frac{a_0}{1}$ since the numerator must equal the number of boundary prongs.  As the monodromy of an L-space knot, the map $\phi$ is right-veering by Theorem~\ref{thm:lspace-veering}, so the fractional Dehn twist coefficient $c(\phi) = \frac{1}{d}$ is strictly positive \cite[Proposition~3.1]{hkm-veering}, hence $d = \frac{a_0}{1}$.  The claim that $a_0 \geq 2$ follows, because otherwise $c(\phi) = \frac{1}{a_0} \geq 1$ would imply by \cite[Theorem~1.2]{hkm-rv2} that $S^3$ supports a taut foliation, which is absurd.
\end{proof}

Lemma~\ref{lem:degeneracy} leads to a quick proof of the upper bound in Conjecture~\ref{conj:goda-teragaito}.

\begin{proposition} \label{prop:upper-bound}
Let $K \subset S^3$ be a hyperbolic knot of genus $g \geq 1$, and suppose that $r$-surgery on $K$ is an elliptic 3-manifold for some $r > 0$.  Then $r \leq 4g-1$.
\end{proposition}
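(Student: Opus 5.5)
Since $r$-surgery on $K$ is elliptic, Theorem~\ref{thm:elliptic} says it is an L-space; as $r>0$, $K$ is an L-space knot. It is hyperbolic, so Theorem~\ref{thm:lspace-fibered} makes it fibered, with monodromy freely isotopic to a pseudo-Anosov map $\phi$ on $\Sigma_{g,1}$. Lemma~\ref{lem:degeneracy} then gives degeneracy slope $\frac{a_0}{1}$ with $a_0\geq 2$. Elliptic manifolds have finite fundamental group, so the same lemma gives $\Delta(\frac{a_0}{1}, r)\leq 1$.

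Next we bound $a_0$. The Euler--Poincar\'e formula \eqref{eq:euler-poincare} gives $a_0 - 2 \leq \sum_{i=0}^k (a_i-2) = 4g-4$, because every interior singularity has $a_i\geq 3$ and so contributes positively. Hence $a_0\leq 4g-2$.

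It remains to turn the distance bound into a bound on $r$. Write $r = p/q$ in lowest terms with $q\geq 1$. Then $\Delta(\frac{a_0}{1}, \frac{p}{q}) = |p - a_0 q| \leq 1$, so $p \leq a_0 q + 1$ and
\[ r = \frac{p}{q} \leq a_0 + \frac{1}{q} \leq a_0 + 1 \leq 4g-1. \]
This completes the argument. Alternatively, once $q=1$ is known from the cyclic surgery theorem, we get $r\leq a_0+1$ directly. I do not expect a real obstacle: the only points to check are that the prong count at the boundary enters \eqref{eq:euler-poincare} as $a_0-2$ with the interior terms nonnegative, and that finite fundamental group is precisely the hypothesis Lemma~\ref{lem:degeneracy} requires.
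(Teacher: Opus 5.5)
Your proof is correct and matches the paper's argument: combine $\Delta(\frac{a_0}{1}, r)\le 1$ from Lemma~\ref{lem:degeneracy} with $a_0\le 4g-2$ from \eqref{eq:euler-poincare}, and your $p/q$ computation simply spells out the paper's step ``whence $r \le a_0+1$''. One small caveat is that your aside getting $q=1$ from the cyclic surgery theorem only covers lens space (cyclic) surgeries, not general elliptic ones, but your main argument does not need it.
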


\begin{proof}
The surgery in question is an L-space by Theorem~\ref{thm:elliptic}, so Theorem~\ref{thm:lspace-fibered} guarantees that $K$ is fibered.  Let $\phi$ be the pseudo-Anosov map isotopic to the monodromy of $K$, and $n = a_0$ the number of boundary prongs for the invariant foliations of $\phi$.  Lemma~\ref{lem:degeneracy} says that $\Delta(n, r) \leq 1$, whence
\[ r \leq n + 1. \]
But the Euler--Poincar\'e formula \eqref{eq:euler-poincare} tells us that $n - 2 \leq 4g - 4$, so $n + 1 \leq 4g-1$ and the claimed inequality follows.
\end{proof}

The case of Proposition~\ref{prop:upper-bound} where $(4g-1)$-surgery on $K$ is a lens space is already well-understood: Baker \cite[Theorem~1.2]{baker-smallgenus} proved that $K$ must then be a Berge knot.

\begin{lemma} \label{lem:stratum-4g-2}
Let $K \subset S^3$ be a hyperbolic knot of genus $g \geq 1$, and suppose for some $r > 4g-3$ that $r$-surgery on $K$ is a lens space, or more generally a 3-manifold with elliptic geometry.  Then the monodromy $\phi$ of $K$ belongs to the stratum $(4g-2;)$.  In other words, the invariant foliations of $\phi$ have $4g-2$ boundary prongs and no interior singularities.
\end{lemma}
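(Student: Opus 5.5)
Since the $r$-surgery is elliptic, it is an L-space by Theorem~\ref{thm:elliptic}, and then $K$ is fibered by Theorem~\ref{thm:lspace-fibered}. So $K$ is a hyperbolic L-space knot with pseudo-Anosov monodromy $\phi$ in some stratum $(a_0;a_1,\dots,a_k)$. Lemma~\ref{lem:degeneracy} applies because elliptic manifolds have finite fundamental group. It gives $\Delta(\frac{a_0}{1}, r) \leq 1$. Write $r = p/q$ with $q \geq 1$; then $|p - a_0 q| \leq 1$. The Euler--Poincar\'e formula \eqref{eq:euler-poincare} gives $a_0 \leq 4g-2$, with equality if and only if $k = 0$. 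So it suffices to rule out $a_0 \leq 4g-3$.

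First I rule out $a_0 \leq 4g-4$. In that case $p \leq a_0 q + 1$, so
\[ r = p/q \leq a_0 + 1/q \leq a_0 + 1 \leq 4g-3, \]
which contradicts $r > 4g-3$.

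Next suppose $a_0 = 4g-3$. Then $\sum_{i\ge1}(a_i-2) = 1$, so there is exactly one interior singularity, and it has three prongs. I claim $\phi$ then has a fixed point.
\begin{itemize}
\item[(a)] The singular set of $\cF_s$ consists of the boundary prongs and this one $3$-pronged interior singularity $p_1$. Since $\phi$ preserves $\cF_s$ and permutes its singularities according to their prong number, $\phi$ must fix $p_1$, the unique interior singularity with three prongs.
\item[(b)] By Theorem~\ref{thm:lspace-fixed}, when $g > 1$ this contradicts $K$ being a hyperbolic L-space knot.
\item[(c)] When $g = 1$ we have $4g-3 = 1$, but Lemma~\ref{lem:degeneracy} says $a_0 \geq 2$. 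So this case cannot occur either.
\end{itemize}

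Therefore $a_0 = 4g-2$ and $k = 0$, which is exactly the stratum $(4g-2;)$.

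The only point needing care is the step in (a): that the unique interior singular point is genuinely fixed rather than only preserved as a set. It is the sole interior singularity, $\phi$ maps interior points to interior points, and singular points go to singular points, so $\phi$ fixes it. Theorem~\ref{thm:lspace-fixed} concerns the map $\phi$ on $\Sigma_{g,1}$ itself, and $p_1$ is an interior point of $\Sigma_{g,1}$, so the theorem applies directly to this fixed point.
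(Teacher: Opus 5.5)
Your proof is correct and follows the same route as the paper. The distance bound from Lemma~\ref{lem:degeneracy} forces $a_0 \geq 4g-3$, the Euler--Poincar\'e formula then leaves only $(4g-2;)$ and $(4g-3;3)$, and the latter is excluded because its lone $3$-pronged singularity would be a fixed point, contradicting Theorem~\ref{thm:lspace-fixed}. Your separate handling of $g=1$ fills a detail the paper passes over: Theorem~\ref{thm:lspace-fixed} requires $g>1$, and there $a_0 \geq 2$ rules out the stratum $(1;3)$.
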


\begin{proof}
Let $a_0$ be the number of boundary prongs.  Lemma~\ref{lem:degeneracy} says that $\Delta(a_0,r) \geq 1$, so
\[ a_0 \geq r - 1 > 4g - 4. \]
Then $a_0 \geq 4g-3$, so by \eqref{eq:euler-poincare}, the only possible strata for $\phi$ are
\[ (a_0; a_1,\dots,a_k) = (4g-2;) \text{ or } (4g-3;3). \]
In the latter case, since $\phi$ fixes the stable foliation $\cF_s$ it must permute any 3-pronged singularities, and this means that the unique $3$-pronged singularity would be a fixed point of $\phi$.  This contradicts the assertion of Theorem~\ref{thm:lspace-fixed} that L-space knots have fixed point free monodromy, so the only possible stratum is $(4g-2;)$.
\end{proof}

\begin{corollary} \label{cor:alexander-dilatation}
Let $K \subset S^3$ be a hyperbolic knot of genus $g \geq 1$, and suppose that $r$-surgery on $K$ has elliptic geometry for some $r > 4g-3$.  Let $\lambda > 1$ be the dilatation of the monodromy of $K$.  Then $-\lambda$ is a root of the Alexander polynomial $\Delta_K(t)$.
\end{corollary}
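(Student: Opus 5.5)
The corollary follows by chaining the preceding lemmas; no new input should be needed.

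First, the elliptic surgery is an L-space by Theorem~\ref{thm:elliptic}. Since $r > 4g-3 > 0$, the surgery slope is positive, so $K$ is an L-space knot in the sense of Section~\ref{sec:l-space}. By Theorem~\ref{thm:lspace-fibered}, $K$ is fibered. As $K$ is hyperbolic, its monodromy is freely isotopic to a pseudo-Anosov map $\phi: \Sigma_{g,1} \to \Sigma_{g,1}$ with dilatation $\lambda > 1$.

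Next, Lemma~\ref{lem:stratum-4g-2} applies directly to the hypotheses $r > 4g-3$ and elliptic surgery. It places $\phi$ in the stratum $(4g-2;)$: there are no interior singularities, and the single boundary prong number $a_0 = 4g-2$ is even. So every $a_j$ in the stratum is even, and the hypothesis of Lemma~\ref{lem:even-prong-dilatation} holds vacuously for the interior singularities.

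Finally, since $K \subset S^3$ is a fibered hyperbolic L-space knot whose stratum consists only of even prong numbers, Lemma~\ref{lem:even-prong-dilatation} gives $\Delta_K(-\lambda) = 0$. Recall how that lemma works: Lemma~\ref{lem:even-prong-orientable} makes $\cF_u$ orientable, using that $S^3$ is a $\Z/2\Z$-homology sphere. The transverse measure then gives a $\pm\lambda$-eigenvector of $\phi^*$ on $H^1(\Sigma_{g,1};\R)$, and Corollary~\ref{cor:alexander-roots-negative} excludes the $+\lambda$ sign.

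The only point needing care is the degenerate case $g = 1$. There $4g-2 = 2$, and the stratum $(2;)$ is still consistent with the Euler--Poincar\'e formula \eqref{eq:euler-poincare}. The earlier lemmas are stated for $g \geq 1$, except that Theorem~\ref{thm:lspace-fixed} (used inside Lemma~\ref{lem:stratum-4g-2}) requires $g > 1$. For $g=1$, however, the alternative stratum $(4g-3;3) = (1;3)$ is already excluded by Lemma~\ref{lem:degeneracy}, which gives $a_0 \geq 2$. So the argument goes through unchanged, and I expect no genuine obstacle.
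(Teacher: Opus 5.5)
Your proposal is correct and matches the paper's proof: Lemma~\ref{lem:stratum-4g-2} puts $\phi$ in the stratum $(4g-2;)$, whose only prong number is even, and the L-space case of Lemma~\ref{lem:even-prong-dilatation} then gives $\Delta_K(-\lambda)=0$. Your remark on $g=1$ is also sound: there Theorem~\ref{thm:lspace-fixed} (stated only for $g>1$) is unavailable, but the alternative stratum $(1;3)$ is already ruled out by the bound $a_0 \geq 2$ from Lemma~\ref{lem:degeneracy}.
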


\begin{proof}
Lemma~\ref{lem:stratum-4g-2} says that the monodromy belongs to the stratum $(4g-2;)$, in which every prong number is even, so then Lemma~\ref{lem:even-prong-dilatation} applies.
\end{proof}

\begin{example} \label{ex:p-239}
The genus-6 pretzel knot $P(-2,3,9)$ is hyperbolic, and Bleiler and Hodgson \cite[\S5]{bleiler-hodgson} showed that it has a pair of non-cyclic elliptic surgeries of slopes $22$ and $23$.  Both of these slopes exceed $4g-3 = 21$, so Lemma~\ref{lem:stratum-4g-2} says that the monodromy of $P(-2,3,9)$ is in the stratum $(22;)$.  Corollary~\ref{cor:alexander-dilatation} also applies in both cases to say that if the monodromy has dilatation $\lambda$, then $-\lambda < -1$ is a root of
\begin{align*}
\Delta_{P(-2,3,9)}(t) &= t^6 - t^5 + t^3 - t^2 + t -1 + t^{-1} - t^{-2} + t^{-3} - t^{-5} + t^{-6} \\
&= (t - 1 + t^{-1})(t^5 - t^3 + 1 - t^{-3} + t^{-5}).
\end{align*}
This has exactly one real root less than $-1$, so we must have $\lambda \approx 1.26123$.
\end{example}

\begin{lemma} \label{lem:genus-4-cyclotomic}
Let $K$ be an L-space knot of genus at most $4$.  Then all of the roots of $\Delta_K(t)$ lie on the unit circle.
\end{lemma}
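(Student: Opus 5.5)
The plan is to enumerate every possible Alexander polynomial of an L-space knot of genus $g \leq 4$ and then check directly that each one is a product of cyclotomic polynomials. Since a product of cyclotomic polynomials has all of its roots on the unit circle, this proves the lemma. The case $g = 0$ is trivial, since then $\Delta_K(t) = 1$.

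\textbf{Step 1: write down the constraints.} For $g \geq 1$, Theorem~\ref{thm:lspace-alexander} and the proof of Proposition~\ref{prop:apply-krcatovich} let me write
\[ \Delta_K(t) = (1-t^{-1})\sum_{j\geq 0} t^{a_j}, \]
with the following properties:
\begin{itemize}
\item $a_0 = g$;
\item $a_j = -j$ for $j \geq g$;
\item $a_j \leq g - 2j$ for $j < g$.
\end{itemize}
In addition, $\Delta_K(t) = \Delta_K(t^{-1})$ must hold. Equivalently, I am choosing the finite set $\{a_1,\dots,a_{g-1}\}$, which is a strictly decreasing sequence in the window $(1-g, g)$ satisfying these bounds. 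The symmetry requirement says that the associated "gap set" is symmetric in the appropriate sense, exactly as for the Alexander polynomial of a semigroup. There are only a handful of such choices for each $g \leq 4$. I would also impose Proposition~\ref{prop:apply-krcatovich} to discard further candidates, for example $t^4 - t^3 + t^2 - 1 + t^{-2} - t^{-3} + t^{-4}$.

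\textbf{Step 2: carry out the enumeration.} I expect the surviving list to be small, roughly as follows.
\begin{itemize}
\item $g = 1$: $t - 1 + t^{-1}$.
\item $g = 2$: $t^2 - t + 1 - t^{-1} + t^{-2}$.
\item $g = 3$: $t^3 - t^2 + 1 - t^{-2} + t^{-3}$ and $t^3 - t^2 + t - 1 + t^{-1} - t^{-2} + t^{-3}$, which are the polynomials of $T_{3,4}$ and $T_{2,7}$.
\item $g = 4$: the polynomials of $T_{2,9}$ and $T_{3,5}$, together with possibly one or two further symmetric candidates that survive Krcatovich's bound.
\end{itemize}
I would verify the list by bounding $a_1 \leq g-2$, $a_2 \leq g-4$, and so on, and then reading off the symmetric possibilities.

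\textbf{Step 3: factor each survivor.} For each polynomial on the list I would exhibit a factorization into cyclotomic polynomials, typically by multiplying by $(t - 1)$ or by $(1 + t^{-1})$ and recognizing a quotient of binomials $t^m \pm 1$. For instance, the torus knot polynomials are
\[ \Delta_{T_{p,q}}(t) = \frac{(t^{pq}-1)(t-1)}{(t^p-1)(t^q-1)} \]
up to a power of $t$. Any non-torus candidate would be handled by explicit division. If some candidate turned out not to be cyclotomic, I would instead check whether it is ruled out by a sharper form of Krcatovich's theorem.

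\textbf{Main obstacle.} I expect the main difficulty to be making the genus-$4$ enumeration airtight, that is, showing that the stated constraints really leave only cyclotomic candidates. To settle this I would tabulate all sequences $(a_1, a_2, a_3)$ with $a_1 \leq 2$, $a_2 \leq 0$ and $a_3 \leq -2$, and test each for symmetry by hand.
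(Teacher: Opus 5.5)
Your proposal is correct and takes essentially the paper's route. You enumerate the Alexander polynomials allowed by Theorem~\ref{thm:lspace-alexander} under Krcatovich's constraints and symmetry, and check that each survivor is a product of cyclotomic polynomials; note that your bound $a_2 \leq 0$ already excludes $t^4 - t^3 + t^2 - 1 + t^{-2} - t^{-3} + t^{-4}$, so the separate appeal to Proposition~\ref{prop:apply-krcatovich} is redundant. Carrying out your genus-$4$ tabulation leaves exactly one non-torus candidate, $t^4 - t^3 + 1 - t^{-3} + t^{-4} = (t^2 - t + 1 - t^{-1} + t^{-2})(t^2 - 1 + t^{-2})$, which is the polynomial of $C_{5,2}(T_{2,3})$, so your fallback to a sharper form of Krcatovich's theorem is never needed.
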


\begin{proof}
We enumerate all of the Alexander polynomials permitted by Theorem~\ref{thm:lspace-alexander}; for a fixed genus $g \geq 2$ there are at most $2^{g-2}$ possibilities, determined completely by which of the monomials $t^{g-2}, t^{g-1},\dots,t^1$ have nonzero coefficients.  For $g \leq 3$ we have only
\begin{align*}
\Delta_{T_{2,3}}(t) &= t - 1 + t^{-1} \\
\Delta_{T_{2,5}}(t) &= t^2 - t + 1 - t^{-1} + t^{-2} \\
\Delta_{T_{2,7}}(t) &= t^3 - t^2 + t - 1 + t^{-1} - t^{-2} + t^{-3} \\
\Delta_{C_{3,2}(T_{2,3})}(t) &= t^3 - t^2 + 1 - t^{-2} + t^{-3} \\
&= (t - 1 + t^{-1})(t^2 - 1 + t^{-2}).
\end{align*}
All four of these have all of their roots on the unit circle.  When $g=4$ three out of the four possibilities are
\begin{align*}
\Delta_{T_{2,9}}(t) &= t^4 - t^3 + t^2 - t + 1 - t^{-1} + t^{-2} - t^{-3} + t^{-4} \\
\Delta_{T_{3,5}}(t) &= t^4 - t^3 + t - 1 + t^{-1} - t^{-3} + t^{-4} \\
\Delta_{C_{5,2}(T_{2,3})}(t) &= t^4 - t^3 + 1 - t^{-3} + t^{-4} \\
&= (t^2 - t + 1 - t^{-1} + t^{-2})(t^2 - 1 + t^{-2}),
\end{align*}
all of whose roots lie on the unit circle; and the fourth is
\[ t^4 - t^3 + t^2 - 1 + t^{-2} - t^{-3} + t^{-4}, \]
which cannot happen by Proposition~\ref{prop:apply-krcatovich}.
\end{proof}

In genus $5$, Theorem~\ref{thm:lspace-alexander} gives us eight possible Alexander polynomials, which we can reduce to six by eliminating the two excluded by Proposition~\ref{prop:apply-krcatovich}.  Three of the six are
\begin{equation} \label{eq:polynomials-5-cyclotomic}
\begin{aligned}
\Delta_{T_{2,11}}(t) &= t^5 - t^4 + t^3 - t^2 + t - 1 + t^{-1} - t^{-2} + t^{-3} - t^{-4} + t^{-5} \\
\Delta_{C_{7,2}(T_{2,3})}(t) &= t^5 - t^4 + t - 1 + t^{-1} - t^{-4} + t^{-5} \\
f_1(t) &= t^5 - t^4 + t^2 - 1 + t^{-2} - t^{-4} + t^{-5} \\
&= (t - 1 + t^{-1})(t^2 - t + 1 - t^{-1} + t^{-2}),
\end{aligned}
\end{equation}
all of whose roots lie on the unit circle, and the remaining ones are
\begin{equation} \label{eq:polynomials-5-not}
\begin{aligned}
\Delta_{P(-2,3,7)}(t) &= t^5 - t^4 + t^2 - t + 1 - t^{-1} + t^{-2} - t^{-4} + t^{-5} \\
f_2(t) &= t^5 - t^4 + t^3 - t^2 + 1 - t^{-2} + t^{-3} - t^{-4} + t^{-5} \\
f_3(t) &= t^5 - t^4 + 1 - t^{-4} + t^{-5}.
\end{aligned}
\end{equation}

We can now conclude the proof of Theorem~\ref{thm:large-elliptic-surgery-main}, which says that if $K$ is hyperbolic of genus $g \leq 5$, and some surgery on $K$ of slope greater than $4g-3$ produces an elliptic manifold, then $K = P(-2,3,7)$.

\begin{proof}[Proof of Theorem~\ref{thm:large-elliptic-surgery-main}]
Let $\phi : \Sigma_{g,1} \to \Sigma_{g,1}$ be the monodromy of $K$, with dilatation $\lambda > 1$.  Corollary~\ref{cor:alexander-dilatation} says that $-\lambda$ is a root of the Alexander polynomial $\Delta_K(t)$.  Then $g \leq 4$ is ruled out by Lemma~\ref{lem:genus-4-cyclotomic}, so we must have $g = 5$.  By the same reasoning $\Delta_K(t)$ cannot possibly be any of the polynomials in \eqref{eq:polynomials-5-cyclotomic}, so it must be one of the polynomials appearing in \eqref{eq:polynomials-5-not}.  In fact, since $\lambda > 1$ we have
\[ f_2(-\lambda) < -\lambda^5 - \lambda^4 - \lambda^3 - \lambda^2 + 1 < -4 + 1 = -3, \]
and likewise $f_3(-\lambda) < -1$, so the only possibility is
\[ \Delta_K(t) = \Delta_{P(-2,3,7)}(t). \]
The dilatation $\lambda$ is then the unique real root of $\Delta_K(-t)$ greater than $1$, which is Lehmer's number $1.17628\ldots$.

Tsang \cite[Theorem~1.4]{tsang-dilatation} classified all of the fully-punctured pseudo-Anosov maps with normalized dilatation less than $\mu^4 \approx 6.854$, where $\mu = \frac{1}{2}(1+\sqrt{5})$ is the golden ratio.  The normalized dilatation of a pseudo-Anosov map $\psi: \Sigma \to \Sigma$ with dilatation $\lambda$ is defined to be $\lambda^{|\chi(\Sigma)|}$.  In our case, the invariant foliations of $\phi$ have no interior singularities by Lemma~\ref{lem:stratum-4g-2}, so $\phi$ is already fully punctured and the normalized dilatation is $\lambda(\phi)^{|\chi(\Sigma_{g,1})|} = \lambda^9 \approx 4.311$, which is less than $\mu^4$.  Then \cite[Table~1]{tsang-dilatation} says that this normalized dilatation is uniquely achieved by the monodromy $\phi_P$ of $P(-2,3,7)$, so $\phi$ must be conjugate to $\phi_P$.  This means that their mapping tori $S^3 \setminus N(K)$ and $S^3 \setminus N\big(P(-2,3,7)\big)$ are homeomorphic, so $K = P(-2,3,7)$ after all \cite{gordon-luecke-complement}.
\end{proof}

\bibliographystyle{myalpha}
\bibliography{References}

\end{document}